\newtheorem{theorem}{Theorem}[section]
\newtheorem{lemma}[theorem]{{\bf Lemma}}
\newtheorem{rem}[theorem]{{\bf Remark}}
\newtheorem{definition}{Definition}[section]
\numberwithin{equation}{section}
\newenvironment{proof}{\indent{\em Proof:}}{\quad \hfill
$\Box$\vspace*{2ex}}
\font\Bbb=msbm10 at 12pt
\newcommand{\R}{\mbox{\Bbb R}}
\newcommand{\N}{\mbox{\Bbb N}}
\newcommand{\C}{\mbox{\Bbb C}}
\begin{document}
\setcounter{page}{1}
\begin{center}
\vspace{0.4cm} {\large{\bf On the Nonlinear $\Psi$-Hilfer Fractional  Differential Equations}} \\
\vspace{0.5cm}
Kishor D. Kucche $^{1}$ \\
kdkucche@gmail.com \\

\vspace{0.35cm}
Ashwini D. Mali  $^{2}$\\
maliashwini144@gmail.com\\

\vspace{0.5cm}
J. Vanterler da C. Sousa$^{3}$ \\
ra160908@ime.unicamp.br \\

\vspace{0.35cm}
$^{1,2}$ Department of Mathematics, Shivaji University, Kolhapur-416 004, Maharashtra, India.\\
$^3$ Department of Applied Mathematics, Imecc-Unicamp,
13083-859, Campinas, SP, Brazil.
\end{center}

\def\baselinestretch{1.0}\small\normalsize

\begin{abstract}
We consider the nonlinear Cauchy problem for $ \Psi $- Hilfer fractional differential equations and investigate the existence, interval of existence and uniqueness  of solution in the weighted space of functions. The continuous dependence of solutions on initial conditions is proved via Weissinger fixed point theorem. Picard’s successive approximation method has been developed to solve nonlinear Cauchy problem for differential equations with $ \Psi $- Hilfer fractional derivative and an estimation have been obtained for the error bound.
Further, by Picard's successive approximation, we derive the representation formula for the solution of linear Cauchy problem for $ \Psi $-Hilfer fractional differential equation with constant coefficient and variable coefficient in terms of Mittag-Leffler function and Generalized (Kilbas--Saigo) Mittag-Leffler function.

\end{abstract}
\noindent\textbf{Key words:}  $\Psi$--Hilfer fractional derivative; Existence and uniqueness; Continuous dependence of solution; Successive approximations;  Mittag-Leffler function, Generalized Mittag-Leffler function.\\
\noindent
\textbf{2010 Mathematics Subject Classification:} 26A33, 34A12, 33E12
\def\baselinestretch{1.5}
\allowdisplaybreaks
\section{Introduction}
During the last decade, fractional differential equations(FDEs) \cite{Diethelm,Kilbas,Podlubny} appeared as rich and beautiful field of research due to their applications to the physical and life sciences.The theoretical development of FDEs in the classical Riemann-Liouville's or Caputo sense have been excellently given in \cite{Diethelm,Kilbas,Podlubny,Gorenflo,Miller,Lak3}. For important and interesting works on existence, uniqueness, data dependence and  heterogeneous qualitative properties of solution of  classical FDEs, we propose the work of  Lakshmikantham et al.\cite{Lak3,Lak1,Lak2}, Gejji et al.\cite{Gejji1,Gejji2}, Kai et al.\cite{Diethelm1}, Trujillo et al.\cite{Kucche}, Benchora et al.\cite{Agrawal2}, and the references cited therein. Other interesting works on these aspect can be found in \cite{Agrawal2, Agrawal,Lak4,Wang,Kou,Idczak,Deng,Li,Kilbas2,Gambo,Zhou,Agrawal3,Ahmad,Anber}.

The fundamental concept and  properties of  integrals and  derivatives of fractional order of a function with respect to the another function viz. $ \Psi $-Riemann-Liouville integral and derivative have been introduced in 
\cite[Chapter 2]{Kilbas}. Following the similar approach, Almeida \cite{Almeida} introduced  $\Psi$-Caputo fractional derivative and investigated the interesting properties of this operator and extended 
few preceding study concerned with the Caputo and the Caputo--Hadamard derivative operators. 

On the other hand, Hilfer \cite{Hilfer} introduced
a fractional derivative $\mathcal{D}_{a+}^{\eta ,\nu }(\cdot)$ having two parameters  $\eta \in (n-1, n), n \in \mathbb{N}$  and  $\nu ~(0\leq \nu \leq 1)$ which in specific gives the Riemann--Liouville and the Caputo derivative operator.
Furati et al. \cite{Furati} analyzed nonlinear FDEs with  Hilfer derivative operator. Sousa and Oliveira \cite{Vanterler1,Vanterler2} introduced a new definition of fractional derivative viz. $\Psi$--Hilfer fractional derivative and  investigated its important properties. Further, it is proved that the $\Psi$--Hilfer derivative is the  generalization of many existing fractional derivative operators. In \cite{Vanterler2}, Sousa and Oliveira established  generalized Gronwall inequality through the fractional integral with respect to another function and studied the existence, uniqueness and data dependence of solution of Cauchy problem with $\Psi$--Hilfer fractional derivative. For the study relating  to existence, uniqueness and stability of different sorts of FDEs involving $\psi$-Hilfer fractional derivative operator, we refer to the work of Sousa et al.\cite{jose1,jose2,jose3} and the references given there in.

Motivated by the work of \cite{Diethelm1,Vanterler1,Vanterler2} in this paper, existence along with the interval of existence, uniqueness and continuous dependence of solutions on initial condition have examined for the nonlinear  $\Psi$-Hilfer FDEs structured as:
\begin{align}
^H \mathcal{D}^{\eta,\,\nu\,;\, \Psi}_{a +}y(t)&= f(t, y(t)),~~ ~t \in  \Lambda =[a,a+\xi], ~~\xi>0, ~0<\eta<1, ~0\leq\nu\leq 1,~\label{eq1}\\
\mathcal{I}_{a +}^{1-\zeta\,;\, \Psi}y(a)&=y_a \in \R, \qquad\zeta =\eta +\nu \left( 1-\eta \right), \label{eq2}
\end{align}
where $^H \mathcal{D}^{\eta,\nu;\, \Psi}_{a +}(\cdot)$ is the $\Psi$-Hilfer derivative of order $\eta$ and type $\nu$, $\mathcal{I}_{a +}^{1-\zeta;\, \Psi}(\cdot)$ is $\Psi$-Riemann--Liouville integral of order $1-\zeta$ and $f: \Lambda \times \R  \to \R $ is an appropriate function.

Further, by Picard’s successive approximation method we derive the existence along with uniqueness of solution for Cauchy type problem \eqref{eq1}-\eqref{eq2}. We have established bound for the error between approximated solution $y_n$ and exact solution $y$ and proved that the difference is approaches to zero when  $n$ is very large. The development of Picard’s successive approximation technique is utilized to obtain representation formula  for the solution of linear Cauchy problem with constant coefficient
 \begin{align} 
  {}^H\mathcal{D}_{a+}^{\eta,\,\nu\,; \,\Psi}y(t)-\lambda y(t)&=f( t ),~\lambda \in \R, ~0<\eta<1, ~ 0\leq\nu\leq 1,~t\in \Delta=[a,b].\label{a1} \\
\mathcal{I}_{a+}^{1-\zeta\,; \,\Psi}y(a)&=y_a \in \R, ~~\zeta  =\eta+\nu(1-\eta) \label{a2}
 \end{align}
 in form of Mittag--Leffler function, where as  the representation formula  for the solution of linear Cauchy problem with variable coefficient  
\begin{align}
{}^H\mathcal{D}_{a+}^{\eta,\,\nu\,; \,\Psi}y(t)-\lambda[\Psi(t)-\Psi(a)]^{\mu-1} y(t) &=0,~0<\eta<1, ~0\leq\nu\leq 1,~~\mu>1-\eta ,~
 t\in\Delta,\label{a3}\\
\mathcal{I}_{a+}^{1-\zeta\,; \,\Psi}y(a) &=y_a \in \R, ~~\zeta=\eta+\nu(1-\eta),\label{a4}
  \end{align}
is obtained in form of generalized (Kilbas--Saigo) Mittag--Leffler function.

This paper is divided in five sections: In the section 2, we provide some definitions, theorems of $\Psi$-Hilfer fractional derivative and the results which will be utilized throughout this paper. In Section 3, we establish the results pertaining to existence, uniqueness and continuous dependence of solution of  \eqref{eq1}-\eqref{eq2}. Section 4, discuss the convergence of Picard's type successive approximations to the solution of \eqref{eq1}-\eqref{eq2}. In Section 5, representation formulas have been obtained for the solution of linear Cauchy
problem with constant coefficient and variable coefficient.   

\section{Preliminaries} \label{preliminaries}

We review a few definitions, notations and results of $\Psi$-Hilfer fractional derivative \cite{Vanterler1, Vanterler2}. 
Let $\Delta=[a,b]$ $(0<a<b<\infty)$ be a finite interval. Consider the space  $C_{1-\zeta;\,\Psi}(\Delta,\,\R)$ of weighted functions $h$ defined on $\Delta$ given by
\begin{equation*}
C_{1-\zeta ;\, \Psi }(\Delta,\,\R) =\left\{ h:\left( a,b\right]
\rightarrow \mathbb{R}~\big|~\left( \Psi \left( t\right) -\Psi \left(
a\right) \right) ^{1-\zeta }h\left( t\right) \in C(\Delta,\,\R)
\right\} ,\text{ }0< \zeta \leq 1
\end{equation*}
endowed with the norm
\begin{equation}\label{space1}
\left\Vert h\right\Vert _{C_{1-\zeta ;\,\Psi }\left(\Delta,\R\right)  }=\underset{t\in \Delta 
}{\max }\left\vert \left( \Psi \left( t\right) -\Psi \left( a\right) \right)
^{1-\zeta }h\left( t\right) \right\vert.
\end{equation}
\begin{definition} 

Let $\eta>0 ~(\eta \in \R)$, $h \in L_1(\Delta,\,\R)$ and $\Psi\in C^{1}(\Delta,\,\R)$ be an increasing function wth $\Psi'(t)\neq 0$, for all $\, t\in \Delta$. Then, the $\Psi$-Riemann--Liouville fractional integral of a function $h$ with respect to $\Psi$ is defined by 
\end{definition}
\begin{equation}\label{P1}
\mathcal{I}_{a+}^{\eta \, ;\,\Psi }h\left( t\right) =\frac{1}{\Gamma \left( \eta
\right) }\int_{a}^{t}\mathrm{Q}^{\eta}_{\Psi}(t,\sigma)h(\sigma) \,d\sigma.
\end{equation}
where,$\mathrm{Q}^{\eta}_{\Psi}(t,s)=\Psi'(s)(\Psi(t)-\Psi(s))^{\eta-1},~ t,s\in \Delta.$

\begin{definition}  Let $n-1<\eta <n \in \N $ and $h ,\Psi\in C^{n}(\Delta,\,\R)$ two functions such that $\Psi$ is increasing with $\Psi'(t)\neq 0,$ for all $\,t\in \Delta$. Then, the  $( \mbox{left-sided} )$ $\Psi$-Hilfer fractional derivative $^{H}\mathcal{D}^{\eta,\,\nu\, ;\,\Psi}_{a+}(\cdot)$  of a function $h$ of order $\eta$ and type $0\leq \nu \leq 1$, is defined by
\begin{equation}\label{HIL}
^{H}\mathcal{D}_{a+}^{\eta ,\,\nu \, ;\,\Psi }h\left(t\right) =\mathcal{I}_{a+}^{\nu \left(
n-\eta \right) \, ;\,\Psi }\left( \frac{1}{\Psi ^{\prime }\left( t\right) }\frac{d}{dt}\right) ^{n}\mathcal{I}_{a+}^{\left( 1-\nu \right) \left( n-\eta
\right) \, ;\,\Psi }h\left( t\right).
\end{equation}
\end{definition}

Following results from \cite{Vanterler1, Vanterler2} play key role in proving our main results.

\begin{lemma}\label{lema1} If $\eta >0$ and \, $0\leq \mu <1,$ then $\mathcal{I}_{a+}^{\eta \, ;\,\Psi }(\cdot)$ is bounded from $C_{\mu \, ;\,\Psi }\left[ a,b\right] $ to $C_{\mu \, ;\,\Psi }\left[ a,b\right] .$ In addition, if $\mu \leq \eta $, then $\mathcal{I}_{a+}^{\eta \, ;\,\Psi }(\cdot)$ is bounded from $C_{\mu \, ;\,\Psi }\left[ a,b\right] $ to $C\left[ a,b\right] $.
\end{lemma}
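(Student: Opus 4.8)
\emph{Proof proposal.} The plan is to reduce both mapping assertions to a single pointwise estimate, obtained by turning the integral in \eqref{P1} into a Beta integral via a change of variable adapted to $\Psi$. Fix $h\in C_{\mu;\Psi}[a,b]$, put $M=\|h\|_{C_{\mu;\Psi}[a,b]}$, so that $|h(\sigma)|\le M\,(\Psi(\sigma)-\Psi(a))^{-\mu}$ for every $\sigma\in(a,b]$. Since $\Psi$ is a strictly increasing $C^{1}$ function (with $\Psi'>0$), the substitution $u=(\Psi(\sigma)-\Psi(a))/(\Psi(t)-\Psi(a))$ is admissible; under it $\mathrm{Q}^{\eta}_{\Psi}(t,\sigma)\,d\sigma=\Psi'(\sigma)(\Psi(t)-\Psi(\sigma))^{\eta-1}d\sigma$ becomes $(\Psi(t)-\Psi(a))^{\eta}(1-u)^{\eta-1}du$, and $\Psi(\sigma)-\Psi(a)=u\,(\Psi(t)-\Psi(a))$, so that
\begin{equation*}
\mathcal{I}_{a+}^{\eta;\Psi}h(t)=\frac{(\Psi(t)-\Psi(a))^{\eta}}{\Gamma(\eta)}\int_{0}^{1}(1-u)^{\eta-1}h(\sigma_t(u))\,du,
\end{equation*}
where $\sigma_t(u)\in(a,t]$ is determined by $\Psi(\sigma_t(u))=\Psi(a)+u(\Psi(t)-\Psi(a))$.

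Next I would insert the weighted bound on $|h|$: this pulls out a factor $(\Psi(t)-\Psi(a))^{-\mu}$ and leaves the Beta integral $\int_{0}^{1}(1-u)^{\eta-1}u^{-\mu}\,du=\Gamma(\eta)\Gamma(1-\mu)/\Gamma(\eta+1-\mu)$, finite precisely because $\eta>0$ and $\mu<1$. This gives
\begin{equation*}
\big|\mathcal{I}_{a+}^{\eta;\Psi}h(t)\big|\le M\,\frac{\Gamma(1-\mu)}{\Gamma(\eta+1-\mu)}\,(\Psi(t)-\Psi(a))^{\eta-\mu},\qquad t\in(a,b].
\end{equation*}
Multiplying by $(\Psi(t)-\Psi(a))^{\mu}$, taking $\max_{t\in\Delta}$, and using $\Psi(t)-\Psi(a)\le\Psi(b)-\Psi(a)$ yields $\|\mathcal{I}_{a+}^{\eta;\Psi}h\|_{C_{\mu;\Psi}[a,b]}\le M\,\Gamma(1-\mu)(\Psi(b)-\Psi(a))^{\eta}/\Gamma(\eta+1-\mu)$, the first boundedness claim; when moreover $\mu\le\eta$ the exponent $\eta-\mu$ is nonnegative, so the displayed bound is itself $\le M\,\Gamma(1-\mu)(\Psi(b)-\Psi(a))^{\eta-\mu}/\Gamma(\eta+1-\mu)$ for all $t\in(a,b]$, giving the second claim (modulo continuity).

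What still has to be checked — and this is the one non-routine point — is that $\mathcal{I}_{a+}^{\eta;\Psi}h$ actually lies in the stated target space, i.e.\ that $(\Psi(t)-\Psi(a))^{\mu}\mathcal{I}_{a+}^{\eta;\Psi}h(t)$, respectively $\mathcal{I}_{a+}^{\eta;\Psi}h(t)$ when $\mu\le\eta$, extends to a continuous function on the closed interval $[a,b]$. Setting $g=(\Psi(\cdot)-\Psi(a))^{\mu}h\in C(\Delta,\R)$ and using $h(\sigma_t(u))=g(\sigma_t(u))\,u^{-\mu}(\Psi(t)-\Psi(a))^{-\mu}$ in the identity above, one gets $(\Psi(t)-\Psi(a))^{\mu}\mathcal{I}_{a+}^{\eta;\Psi}h(t)=\Gamma(\eta)^{-1}(\Psi(t)-\Psi(a))^{\eta}\int_{0}^{1}(1-u)^{\eta-1}u^{-\mu}g(\sigma_t(u))\,du$. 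For each fixed $u$ the map $t\mapsto\sigma_t(u)$ is continuous, because $\Psi$ is a homeomorphism of $[a,b]$ onto $[\Psi(a),\Psi(b)]$, hence $t\mapsto g(\sigma_t(u))$ is continuous; the integrand is dominated by $\|g\|_{C(\Delta,\R)}(1-u)^{\eta-1}u^{-\mu}\in L_1(0,1)$, so by dominated convergence the integral $\int_{0}^{1}(1-u)^{\eta-1}u^{-\mu}g(\sigma_t(u))\,du$ is continuous on $[a,b]$ with value $g(a)\Gamma(\eta)\Gamma(1-\mu)/\Gamma(\eta+1-\mu)$ at $t=a$. Multiplying by the continuous factor $(\Psi(t)-\Psi(a))^{\eta}$, respectively $(\Psi(t)-\Psi(a))^{\eta-\mu}$ (continuous on $[a,b]$ exactly because $\eta-\mu\ge0$), produces a function continuous on $[a,b]$, which vanishes at $t=a$ whenever the exponent is strictly positive. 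This places $\mathcal{I}_{a+}^{\eta;\Psi}h$ in $C_{\mu;\Psi}[a,b]$ and, for $\mu\le\eta$, in $C[a,b]$, with the norm bounds derived above; linearity of $\mathcal{I}_{a+}^{\eta;\Psi}$ being obvious, boundedness of the operator follows. The step I expect to demand the most care is this last one — making the dominated-convergence argument honest uniformly near the endpoints $u=0$ and $u=1$ and treating the borderline case $\mu=\eta$ in the second assertion — whereas the estimate itself is entirely standard.
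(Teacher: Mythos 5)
Your proof is correct. Note that the paper itself offers no proof of this lemma: it is quoted as a preliminary result from the cited works of Sousa and Oliveira, so there is no in-paper argument to compare against. Your route — the substitution $u=(\Psi(\sigma)-\Psi(a))/(\Psi(t)-\Psi(a))$ reducing the kernel to a Beta integral, the pointwise bound $|\mathcal{I}_{a+}^{\eta;\Psi}h(t)|\le M\,\Gamma(1-\mu)(\Psi(t)-\Psi(a))^{\eta-\mu}/\Gamma(\eta+1-\mu)$, and the dominated-convergence argument for continuity of the weighted image up to $t=a$ — is the standard one from the cited sources, and your constant is consistent with Lemma \ref{lema2} of the paper (take $\delta=1-\mu$ there). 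The two points you flag as delicate are in fact already handled by what you wrote: the domination $\|g\|_{C(\Delta,\R)}(1-u)^{\eta-1}u^{-\mu}$ is integrable on $(0,1)$ uniformly in $t$ precisely because $\eta>0$ and $\mu<1$, and in the borderline case $\mu=\eta$ the prefactor $(\Psi(t)-\Psi(a))^{\eta-\mu}$ is identically $1$ while the integral itself tends to $g(a)\Gamma(\eta)\Gamma(1-\mu)/\Gamma(\eta+1-\mu)$ as $t\to a^{+}$, so the image is continuous on $[a,b]$ (merely not vanishing at $a$). No gap remains.
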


\begin{lemma}\label{lema2} Let $\eta>0$ and $\delta>0$. If $h(t)= \left( \Psi \left( t\right) -\Psi \left( a\right)
\right) ^{\delta -1}$, then 
\begin{equation*}
\mathcal{I}_{a+}^{\eta \, ;\,\Psi }h(t)=\frac{\Gamma \left( \delta \right) }{\Gamma \left(
\eta +\delta \right) }\left( \Psi \left(t\right) -\Psi \left( a\right)
\right) ^{\eta +\delta -1}.
\end{equation*}

\end{lemma}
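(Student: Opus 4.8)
The statement to prove is Lemma~\ref{lema2}: computing $\mathcal{I}_{a+}^{\eta;\Psi}h(t)$ where $h(t) = (\Psi(t)-\Psi(a))^{\delta-1}$.

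Let me sketch a proof plan.

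The plan is to compute directly from the definition~\eqref{P1} of the $\Psi$-Riemann--Liouville integral. Substituting $h(\sigma) = (\Psi(\sigma)-\Psi(a))^{\delta-1}$ gives
$$\mathcal{I}_{a+}^{\eta;\Psi}h(t) = \frac{1}{\Gamma(\eta)}\int_a^t \Psi'(\sigma)(\Psi(t)-\Psi(\sigma))^{\eta-1}(\Psi(\sigma)-\Psi(a))^{\delta-1}\,d\sigma.$$
The natural move is the substitution $u = \frac{\Psi(\sigma)-\Psi(a)}{\Psi(t)-\Psi(a)}$, which is legitimate because $\Psi$ is increasing and $C^1$, so $u$ runs monotonically from $0$ to $1$ as $\sigma$ runs from $a$ to $t$, and $du = \frac{\Psi'(\sigma)}{\Psi(t)-\Psi(a)}\,d\sigma$. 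Under this change of variables, $\Psi(t)-\Psi(\sigma) = (\Psi(t)-\Psi(a))(1-u)$ and $\Psi(\sigma)-\Psi(a) = (\Psi(t)-\Psi(a))u$, so all the $\Psi(t)-\Psi(a)$ factors pull out as $(\Psi(t)-\Psi(a))^{\eta+\delta-1}$, and the integral reduces to $\int_0^1 (1-u)^{\eta-1}u^{\delta-1}\,du = B(\eta,\delta) = \frac{\Gamma(\eta)\Gamma(\delta)}{\Gamma(\eta+\delta)}$.

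Then I would simplify: the $\Gamma(\eta)$ in the Beta function cancels the $1/\Gamma(\eta)$ prefactor, leaving exactly $\frac{\Gamma(\delta)}{\Gamma(\eta+\delta)}(\Psi(t)-\Psi(a))^{\eta+\delta-1}$, which is the claimed identity.

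I do not expect a genuine obstacle here; this is a routine computation. The only points requiring a word of care are: (i) justifying that the change of variables is valid on the whole interval, which follows from $\Psi \in C^1$ being strictly increasing with $\Psi' \neq 0$; and (ii) the convergence of the Beta integral at its endpoints, which needs $\eta > 0$ and $\delta > 0$ — precisely the hypotheses. One could alternatively phrase this as a known formula from \cite{Kilbas} and simply cite it, but the direct substitution argument above is self-contained and short.
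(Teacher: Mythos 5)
Your computation is correct and complete: the substitution $u=\bigl(\Psi(\sigma)-\Psi(a)\bigr)/\bigl(\Psi(t)-\Psi(a)\bigr)$ reduces the integral to the Beta function $B(\delta,\eta)=\Gamma(\delta)\Gamma(\eta)/\Gamma(\eta+\delta)$, the $\Gamma(\eta)$ cancels, and the hypotheses $\eta>0$, $\delta>0$ are exactly what makes the endpoint singularities integrable. The paper itself gives no proof of this lemma --- it is quoted from \cite{Vanterler1,Vanterler2} (and ultimately from \cite{Kilbas}) --- and the argument you give is precisely the standard one used in those sources, so there is nothing to add.
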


\begin{lemma}\label{lema3} Let $\Psi\in C^{1}(\Delta,\mathbb{R})$ be  increasing function with $\Psi'(t)\neq 0$, for all $t\in\Delta$. If $\zeta =\eta +\nu \left( 1-\eta \right) $ where, $ 0<\eta <1$ and $0\leq \nu \leq 1,$ then $\Psi$-Riemann-Liouville fractional integral operator $\mathcal{I}_{a+}^{\eta \, ;\,\Psi }\left( \cdot \right): C_{1-\zeta \, ;\,\Psi }\left[ a,b\right]  \rightarrow C_{1-\zeta \, ;\,\Psi }\left[
a,b\right]$ is bounded and it is given by :
\begin{equation}\label{eq23}
\left\Vert \mathcal{I}_{a+}^{\eta \, ;\,\Psi }h\right\Vert _{C_{1-\zeta \, ;\,\Psi }\left[ a,b \right] }\leq M\frac{\Gamma \left( \zeta \right) }{\Gamma \left( \zeta +\eta \right) }\left( \Psi \left( t\right) -\Psi \left( a\right) \right)^{\eta },
\end{equation}
where, $M$ is the bound of a bounded function $(\Psi(\cdot)-\Psi(a))^{1-\zeta}h(\cdot)$.
\end{lemma}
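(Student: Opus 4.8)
The plan is to reduce the estimate to Lemmas~\ref{lema1} and \ref{lema2}. First I would use the definition of the weighted space: if $h\in C_{1-\zeta;\Psi}[a,b]$ then the map $t\mapsto(\Psi(t)-\Psi(a))^{1-\zeta}h(t)$ extends continuously to $\Delta$ and is hence bounded by some constant $M$ (this $M$ is precisely the one appearing in the statement). Equivalently, $|h(\sigma)|\le M\,(\Psi(\sigma)-\Psi(a))^{\zeta-1}$ for all $\sigma\in(a,b]$. Note that $\zeta=\eta+\nu(1-\eta)$ satisfies $0<\zeta\le 1$, so $0\le 1-\zeta<1$ and the exponent $\zeta-1\in(-1,0]$ keeps $h$ integrable near $a$, consistent with \eqref{P1}.

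Next I would substitute this pointwise bound into \eqref{P1}. Because $\Psi$ is increasing, the kernel $\mathrm{Q}^{\eta}_{\Psi}(t,\sigma)=\Psi'(\sigma)(\Psi(t)-\Psi(\sigma))^{\eta-1}$ is nonnegative for $a<\sigma<t$, so the absolute value can be moved inside the integral:
\[
\big|\mathcal{I}_{a+}^{\eta;\Psi}h(t)\big|
\le \frac{M}{\Gamma(\eta)}\int_a^t \Psi'(\sigma)\,(\Psi(t)-\Psi(\sigma))^{\eta-1}\,(\Psi(\sigma)-\Psi(a))^{\zeta-1}\,d\sigma
= M\,\mathcal{I}_{a+}^{\eta;\Psi}\big[(\Psi(\cdot)-\Psi(a))^{\zeta-1}\big](t).
\]
Since $\zeta>0$, Lemma~\ref{lema2} with $\delta=\zeta$ evaluates the remaining integral exactly, giving
\[
\big|\mathcal{I}_{a+}^{\eta;\Psi}h(t)\big|\le M\,\frac{\Gamma(\zeta)}{\Gamma(\zeta+\eta)}\,(\Psi(t)-\Psi(a))^{\zeta+\eta-1}.
\]
Multiplying through by the weight $(\Psi(t)-\Psi(a))^{1-\zeta}$ then produces precisely \eqref{eq23}, and taking the maximum of the left-hand side over $t\in\Delta$ shows that $\mathcal{I}_{a+}^{\eta;\Psi}$ is bounded on $C_{1-\zeta;\Psi}[a,b]$.

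Finally, to be sure the left-hand side of \eqref{eq23} is well defined, i.e.\ that $\mathcal{I}_{a+}^{\eta;\Psi}h$ indeed lies in $C_{1-\zeta;\Psi}[a,b]$, I would invoke Lemma~\ref{lema1} with $\mu=1-\zeta$ (legitimate since $0\le 1-\zeta<1$), which gives boundedness of $\mathcal{I}_{a+}^{\eta;\Psi}$ from $C_{1-\zeta;\Psi}[a,b]$ into itself. I do not expect any genuine obstacle here: the whole argument is bookkeeping around the two cited lemmas, the one subtlety being to record that the hypotheses match ($\mu=1-\zeta$, $\delta=\zeta$, $\zeta\in(0,1]$) and that the monotonicity of $\Psi$ is exactly what justifies pulling the absolute value inside the integral.
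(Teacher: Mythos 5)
Your proof is correct. The paper itself states Lemma~\ref{lema3} without proof, importing it from the cited references, but your argument --- bounding $|h(\sigma)|\le M(\Psi(\sigma)-\Psi(a))^{\zeta-1}$, pulling the absolute value through the nonnegative kernel, evaluating via Lemma~\ref{lema2} with $\delta=\zeta$, and multiplying by the weight --- is exactly the computation the authors carry out repeatedly inside the proofs of Theorems~\ref{th3.1} and~\ref{th4.1}, so there is nothing to add beyond your own remark that the free $t$ on the right of \eqref{eq23} should be read as $b$ (or the bound read pointwise before taking the maximum).
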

\begin{theorem}\label{lema4}
 Let $\tilde{u},$ $\tilde{v}\in L_1(\Delta,\,\mathbb{R})$ and $\tilde{g}\in C(\Delta,\,\mathbb{R}) .$ Let $\Psi \in C^{1}(\Delta,\,\mathbb{R}) $ be an increasing function with $\Psi ^{\prime }\left( t\right) \neq 0$, for all $~t\in \Delta$. Assume that
\begin{enumerate}
\item $\tilde{u}$ and $\tilde{v}$ are nonnegative;
\item $\tilde{g}$ is nonnegative and nondecreasing.

\end{enumerate}

If
\begin{equation*}
\tilde{u}( t) \leq \tilde{v}( t) +\tilde{g}( t) \int_{a}^{t}\mathrm{Q}^{\eta}_{\Psi}(t,\sigma)\,\tilde{u}(\sigma)\, \,d\sigma,
\end{equation*}
then
\begin{equation}\label{jose}
\tilde{u}(t) \leq\tilde{v}( t) +\int_{a}^{t}\overset{\infty }{%
\underset{m=1}{\sum }}\frac{\left[\tilde{g}\left( t\right) \Gamma \left( \eta
\right) \right] ^{m}}{\Gamma \left( m\eta \right) }\,\mathrm{Q}^{m\eta}_{\Psi}(t,s)\,\,\tilde{v}(\sigma) \,d\sigma,~t\in \Delta.
\end{equation}
where,\,$\mathrm{Q}^{m\eta}_{\Psi}(t,s)=\Psi'(s)(\Psi(t)-\Psi(s))^{m\eta-1}, ~t, s \in \Delta .$
Further, if  $\tilde{v}$ is  a nondecreasing function on $\Delta$ then
$$\tilde{u}(t)\leq\tilde{v}(t)\,\mathcal{E}_{\eta}\left(\tilde{g}(t) \Gamma(\eta)\left(\Psi(t)-\Psi(a)\right)^{\eta}\right),$$
where, $\mathcal{E}_{\eta}(\cdot)$ is the Mittag-Leffler function of one parameter.
\end{theorem}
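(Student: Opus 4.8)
\emph{Proof proposal.} The plan is to iterate the integral inequality in the spirit of the classical Gronwall argument, adapted to the $\Psi$-kernel. Introduce the monotone linear operator $B$, acting on nonnegative locally integrable functions on $\Delta$, by
\[
(B\phi)(t)=\tilde{g}(t)\int_{a}^{t}\mathrm{Q}^{\eta}_{\Psi}(t,\sigma)\,\phi(\sigma)\,d\sigma,
\]
so that the hypothesis reads $\tilde{u}\leq \tilde{v}+B\tilde{u}$ pointwise on $\Delta$, and $B$ preserves the pointwise order. A straightforward induction on $n$ (applying $B$ to both sides and using monotonicity) then gives
\[
\tilde{u}(t)\leq \sum_{k=0}^{n-1}(B^{k}\tilde{v})(t)+(B^{n}\tilde{u})(t),\qquad t\in\Delta,\ n\geq1,
\]
with $B^{0}$ the identity. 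The heart of the matter is to control $B^{n}\phi$, and I would prove by induction on $n$ the pointwise estimate
\[
(B^{n}\phi)(t)\leq \big[\tilde{g}(t)\big]^{n}\int_{a}^{t}\frac{[\Gamma(\eta)]^{n}}{\Gamma(n\eta)}\,\mathrm{Q}^{n\eta}_{\Psi}(t,\sigma)\,\phi(\sigma)\,d\sigma .
\]

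Two ingredients drive the inductive step. First, since $\tilde{g}$ is nondecreasing, $\tilde{g}(\sigma)\leq\tilde{g}(t)$ for $a\leq\sigma\leq t$, so the powers of $\tilde{g}$ appearing inside the nested integrals can be pulled out as $[\tilde{g}(t)]^{n}$. Second, after interchanging the order of integration by Fubini, one needs the semigroup identity for the kernels,
\[
\int_{\sigma}^{t}\mathrm{Q}^{\eta}_{\Psi}(t,\tau)\,\mathrm{Q}^{n\eta}_{\Psi}(\tau,\sigma)\,d\tau
=\frac{\Gamma(\eta)\,\Gamma(n\eta)}{\Gamma((n+1)\eta)}\,\mathrm{Q}^{(n+1)\eta}_{\Psi}(t,\sigma),
\]
which I would establish by the substitution $r=\big(\Psi(\tau)-\Psi(\sigma)\big)/\big(\Psi(t)-\Psi(\sigma)\big)$, reducing the inner integral to the Beta integral $\int_{0}^{1}r^{n\eta-1}(1-r)^{\eta-1}\,dr=\Gamma(n\eta)\Gamma(\eta)/\Gamma((n+1)\eta)$; equivalently this is the composition rule $\mathcal{I}_{a+}^{\eta\,;\,\Psi}\mathcal{I}_{a+}^{n\eta\,;\,\Psi}=\mathcal{I}_{a+}^{(n+1)\eta\,;\,\Psi}$ read through Lemma~\ref{lema2}. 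I expect this change-of-variables/Beta-function step to be the main technical obstacle, mainly because one must track the $\Psi'$-weights correctly so that the transformed kernel is again of the form $\mathrm{Q}^{(n+1)\eta}_{\Psi}$.

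Next I would use the estimate above with $\phi=\tilde{u}$ to show $(B^{n}\tilde{u})(t)\to0$ as $n\to\infty$ for each fixed $t$: the functions $\tilde{g}$ and $\Psi'$ are continuous, hence bounded on $[a,t]$; $\Psi(t)-\Psi(\sigma)\leq\Psi(t)-\Psi(a)$; and $\tilde{u}\in L_{1}(\Delta,\R)$. Thus for $n$ large enough that $n\eta\geq1$ one gets $(B^{n}\tilde{u})(t)\leq C\,c^{\,n}/\Gamma(n\eta)$ for constants $C,c$ depending only on $t$, and this tends to $0$ since $\Gamma(n\eta)$ outgrows every geometric sequence. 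Letting $n\to\infty$ in the iterated inequality and using the estimate for $B^{k}\tilde{v}$,
\[
\tilde{u}(t)\leq\sum_{k=0}^{\infty}(B^{k}\tilde{v})(t)\leq\tilde{v}(t)+\sum_{k=1}^{\infty}[\tilde{g}(t)]^{k}\int_{a}^{t}\frac{[\Gamma(\eta)]^{k}}{\Gamma(k\eta)}\,\mathrm{Q}^{k\eta}_{\Psi}(t,\sigma)\,\tilde{v}(\sigma)\,d\sigma ;
\]
since every term is nonnegative, Tonelli's theorem permits the exchange of the summation and the integral, which is exactly \eqref{jose}.

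Finally, for the last assertion I would use that $\tilde{v}$ nondecreasing implies $\tilde{v}(\sigma)\leq\tilde{v}(t)$ for $a\leq\sigma\leq t$, factor $\tilde{v}(t)$ out of \eqref{jose}, and evaluate the remaining integrals by the $\delta=1$ case of Lemma~\ref{lema2}, namely $\int_{a}^{t}\mathrm{Q}^{k\eta}_{\Psi}(t,\sigma)\,d\sigma=\big(\Psi(t)-\Psi(a)\big)^{k\eta}/(k\eta)=\Gamma(k\eta)\big(\Psi(t)-\Psi(a)\big)^{k\eta}/\Gamma(k\eta+1)$. The $k$-th term then becomes $\big[\tilde{g}(t)\Gamma(\eta)(\Psi(t)-\Psi(a))^{\eta}\big]^{k}/\Gamma(k\eta+1)$, so adding the $k=0$ term $\tilde{v}(t)$ we obtain
\[
\tilde{u}(t)\leq\tilde{v}(t)\sum_{k=0}^{\infty}\frac{\big[\tilde{g}(t)\Gamma(\eta)(\Psi(t)-\Psi(a))^{\eta}\big]^{k}}{\Gamma(k\eta+1)}=\tilde{v}(t)\,\mathcal{E}_{\eta}\!\big(\tilde{g}(t)\Gamma(\eta)(\Psi(t)-\Psi(a))^{\eta}\big),
\]
which completes the proof.
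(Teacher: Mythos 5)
Your proof is correct. Note that the paper itself gives no proof of this statement: Theorem~\ref{lema4} is quoted as a preliminary from the cited works of Sousa and Oliveira, and your argument --- iterating the monotone operator $B$, establishing the kernel estimate for $B^{n}$ via the Beta-integral semigroup identity $\int_{\sigma}^{t}\mathrm{Q}^{\eta}_{\Psi}(t,\tau)\,\mathrm{Q}^{n\eta}_{\Psi}(\tau,\sigma)\,d\tau=\frac{\Gamma(\eta)\Gamma(n\eta)}{\Gamma((n+1)\eta)}\mathrm{Q}^{(n+1)\eta}_{\Psi}(t,\sigma)$, showing $B^{n}\tilde{u}\to0$, and then specializing with $\tilde{v}$ nondecreasing --- is exactly the standard proof given in that reference, so there is nothing to correct.
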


Existence and uniqueness results are proved via following fixed point theorems.

\begin{theorem}[\cite{Diethelm1}, Schauder] \label{Schauder}
Let $\mathcal{X}$ be a Banach space, let $\mathcal{U}$ be a nonempty convex bounded closed   subset of $\mathcal{X}$ and let $\mathcal{A}:\mathcal{U}\rightarrow \mathcal{U}$ be a completely continuous operator. Then $\mathcal{A}$ has at least one fixed point.
\end{theorem}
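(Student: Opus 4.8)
The plan is to reduce this infinite--dimensional statement to Brouwer's fixed point theorem in $\R^{n}$ by means of finite--dimensional approximations, i.e.\ the classical Schauder projection argument. Since $\mathcal{A}$ is completely continuous and $\mathcal{U}$ is bounded, the set $K=\overline{\mathcal{A}(\mathcal{U})}$ is a compact subset of $\mathcal{X}$; because $\mathcal{A}(\mathcal{U})\subseteq\mathcal{U}$ and $\mathcal{U}$ is closed, in fact $K\subseteq\mathcal{U}$. First I would fix $\varepsilon>0$ and use compactness of $K$ to choose a finite $\varepsilon$--net $\{x_{1},\dots,x_{m}\}\subset K$. On $K$ I would then define the Schauder projection
\[
P_{\varepsilon}(x)=\frac{\sum_{i=1}^{m}\mu_{i}(x)\,x_{i}}{\sum_{i=1}^{m}\mu_{i}(x)},\qquad \mu_{i}(x)=\max\{0,\;\varepsilon-\|x-x_{i}\|\}.
\]
The key sub--claims about $P_{\varepsilon}$ are: the denominator never vanishes on $K$ (true because $\{x_{i}\}$ is an $\varepsilon$--net, so for each $x\in K$ some $\mu_{i}(x)>0$); $P_{\varepsilon}$ is continuous on $K$ and takes values in $\mathcal{U}_{\varepsilon}:=\mathrm{conv}\{x_{1},\dots,x_{m}\}\subseteq\mathcal{U}$ (here convexity of $\mathcal{U}$ is used); and $\|P_{\varepsilon}(x)-x\|<\varepsilon$ for every $x\in K$, since $P_{\varepsilon}(x)$ is a convex combination of those $x_{i}$ lying within $\varepsilon$ of $x$.

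Next I would consider $\mathcal{A}_{\varepsilon}=P_{\varepsilon}\circ\mathcal{A}$ restricted to $\mathcal{U}_{\varepsilon}$. Then $\mathcal{A}_{\varepsilon}(\mathcal{U}_{\varepsilon})=P_{\varepsilon}\big(\mathcal{A}(\mathcal{U}_{\varepsilon})\big)\subseteq P_{\varepsilon}(K)\subseteq\mathcal{U}_{\varepsilon}$, so $\mathcal{A}_{\varepsilon}$ is a continuous self--map of the compact convex subset $\mathcal{U}_{\varepsilon}$ of the finite--dimensional space $\mathrm{span}\{x_{1},\dots,x_{m}\}$. Brouwer's fixed point theorem then yields $y_{\varepsilon}\in\mathcal{U}_{\varepsilon}$ with $\mathcal{A}_{\varepsilon}(y_{\varepsilon})=y_{\varepsilon}$, and hence
\[
\|y_{\varepsilon}-\mathcal{A}(y_{\varepsilon})\|=\|P_{\varepsilon}(\mathcal{A}(y_{\varepsilon}))-\mathcal{A}(y_{\varepsilon})\|<\varepsilon .
\]

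Finally I would pass to the limit. Taking $\varepsilon=1/n$ and $y_{n}:=y_{1/n}\in\mathcal{U}$, the points $\mathcal{A}(y_{n})$ all lie in the compact set $K$, so along a subsequence $\mathcal{A}(y_{n_{k}})\to y^{\ast}\in\mathcal{U}$. From $\|y_{n_{k}}-\mathcal{A}(y_{n_{k}})\|<1/n_{k}\to 0$ it follows that $y_{n_{k}}\to y^{\ast}$ as well, and continuity of $\mathcal{A}$ gives $\mathcal{A}(y^{\ast})=\lim_{k}\mathcal{A}(y_{n_{k}})=y^{\ast}$, so $y^{\ast}$ is the desired fixed point. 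The main obstacle is the first step: the careful construction and verification of the Schauder projection $P_{\varepsilon}$ — in particular that it maps into $\mathcal{U}$ (forcing use of convexity and closedness of $\mathcal{U}$) and that it is an $\varepsilon$--approximation of the identity on $K$. Once that lemma is established, the appeal to Brouwer's theorem and the compactness argument for the limit are routine.
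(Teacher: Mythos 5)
Your argument is correct: it is the classical proof of Schauder's theorem via the Schauder projection onto the convex hull of a finite $\varepsilon$-net of the compact set $K=\overline{\mathcal{A}(\mathcal{U})}$, reduction to Brouwer's theorem on $\mathcal{U}_{\varepsilon}=\mathrm{conv}\{x_{1},\dots,x_{m}\}$, and a compactness/continuity passage to the limit; each of the sub-claims you isolate (non-vanishing denominator, $P_{\varepsilon}(K)\subseteq\mathcal{U}_{\varepsilon}\subseteq\mathcal{U}$, and $\|P_{\varepsilon}x-x\|<\varepsilon$ on $K$) is exactly what is needed and is verified correctly. Note, however, that the paper itself offers no proof of this statement: it is quoted from \cite{Diethelm1} as a known tool (Schauder's fixed point theorem), so there is no internal argument to compare yours against; your proposal would serve as a complete, standard proof of the cited result.
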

\begin{theorem}[\cite{Diethelm1}, Weissinger] \label{Weissinger}
Assume $(\mathcal{U},d)$ to be a non empty complete metric space and let $\eta_j\geq 0$ for every $j\in \N_0$ such that $\overset{\infty}{\underset{j=0}{\sum }}\eta_j$ converges. Furthermore, let the mapping $\mathcal{A}:\mathcal{U}\rightarrow {\mathcal{U}}$ satisfy the inequality $$d(\mathcal{A}^{j}u,\mathcal{A}^{j}v)\leq \eta_{j}\,\,d(u,v)$$
for every $j\in \N$ and every $u,v \in \mathcal{U}.$ Then, $\mathcal{A}$ has a unique fixed point $u^{*}.$ Moreover, for any $u_{0}\in \mathcal{U},$ the sequence $(\mathcal{A}^{j}u_{0})_{j=1}^{\infty}$ converges to this fixed point $u^{*}.$
\end{theorem}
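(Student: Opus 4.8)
The plan is to run the classical Picard-iteration / Cauchy-sequence argument, exploiting the hypothesis only through the estimate $d(\mathcal{A}^j u,\mathcal{A}^j v)\le \eta_j\,d(u,v)$ and the convergence of $\sum_{j=0}^{\infty}\eta_j$. First I would fix an arbitrary $u_0\in\mathcal{U}$ and set $u_j:=\mathcal{A}^{j}u_0$. For $m>n$, the triangle inequality together with the hypothesis applied to the pair $u_0,\ u_1=\mathcal{A}u_0$ gives
\[
d(u_n,u_m)\le \sum_{j=n}^{m-1} d(u_j,u_{j+1})=\sum_{j=n}^{m-1} d(\mathcal{A}^{j}u_0,\mathcal{A}^{j}u_1)\le \left(\sum_{j=n}^{m-1}\eta_j\right)d(u_0,u_1).
\]
Since $\sum_{j=0}^{\infty}\eta_j$ converges, its tails tend to $0$, so $(u_j)$ is a Cauchy sequence; by completeness of $(\mathcal{U},d)$ it converges to some $u^{*}\in\mathcal{U}$.

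Next I would show $u^{*}$ is a fixed point. The case $j=1$ of the hypothesis says $d(\mathcal{A}u,\mathcal{A}v)\le \eta_1\,d(u,v)$ with $\eta_1<\infty$ (every term of a convergent series is finite), so $\mathcal{A}$ is Lipschitz, hence continuous. Then
\[
d(u^{*},\mathcal{A}u^{*})\le d(u^{*},u_{j+1})+d(\mathcal{A}u_j,\mathcal{A}u^{*})\le d(u^{*},u_{j+1})+\eta_1\,d(u_j,u^{*})\longrightarrow 0
\]
as $j\to\infty$, and therefore $\mathcal{A}u^{*}=u^{*}$.

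For uniqueness, suppose $v^{*}$ is another fixed point; then $\mathcal{A}^{j}u^{*}=u^{*}$ and $\mathcal{A}^{j}v^{*}=v^{*}$ for all $j$, so $d(u^{*},v^{*})=d(\mathcal{A}^{j}u^{*},\mathcal{A}^{j}v^{*})\le \eta_j\,d(u^{*},v^{*})$ for every $j$. Because $\eta_j\to 0$ (terms of a convergent series), pick $j$ with $\eta_j<1$; this forces $d(u^{*},v^{*})=0$, i.e.\ $v^{*}=u^{*}$. Finally, for an arbitrary initial point $u_0$ the same estimate yields $d(\mathcal{A}^{j}u_0,u^{*})=d(\mathcal{A}^{j}u_0,\mathcal{A}^{j}u^{*})\le \eta_j\,d(u_0,u^{*})\to 0$, so $(\mathcal{A}^{j}u_0)_{j=1}^{\infty}$ converges to $u^{*}$.

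I do not expect a genuine obstacle here, since this is a textbook result; the only two points that need a moment's care are (i) using the vanishing of the \emph{tails} $\sum_{j\ge n}\eta_j$ — not merely boundedness of the full series — to get the Cauchy property, and (ii) observing that the $j=1$ instance of the contraction-type inequality already supplies continuity of $\mathcal{A}$, which is precisely what permits passing to the limit inside $\mathcal{A}$ in the fixed-point step.
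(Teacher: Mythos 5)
Your proof is correct. Note, however, that the paper does not prove this theorem at all: it is quoted verbatim from Diethelm--Ford \cite{Diethelm1} as a known tool, so there is no in-paper argument to compare against. Your write-up is the standard proof of Weissinger's theorem (telescoping plus the vanishing tails of $\sum_j \eta_j$ to get the Cauchy property, continuity of $\mathcal{A}$ from the $j=1$ estimate to pass to the limit, and $\eta_j\to 0$ to force uniqueness), and each step checks out; the only cosmetic caveat is that the hypothesis is stated for $j\in\N$, so in the telescoping sum one should start the estimate at $j\geq 1$ (irrelevant for the Cauchy property, which only uses large $n$).
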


\begin{definition}[\cite{Gorenflo}]
Let $\eta >0,\, \nu >0~ ( ~\eta, \nu \in \R) $. Then the two parameter Mittag-Leffler function is defined as
$$\mathcal{E}_{\eta,\,\nu}(z)=\sum_{k=0}^{\infty}\frac{z^k}{\Gamma(k \eta +\nu)}.$$
\end{definition}
\begin{definition}[\cite{Gorenflo}]\label{defKS}
Let $\eta,\,m \in \R$ and  $l\in\C$ such that $\eta>0,~m>0$ and $\eta(jm+l)+1\neq-1,-2,-3,\cdots (j=0,1,2,\cdots) $. Then generalized (Kilbas--Saigo) Mittag--Leffler type function of three parameters is defined by  
 $$ \mathcal{E}_{\eta,\,m,\,l}\,(z)= \sum_{k=0}^{\infty} c_k\, z^k $$
where $$ c_{0} =1, ~c_{k}=\prod_{j=0}^{k-1}\frac{\Gamma(\eta[jm+l]+1)}{\Gamma(\eta[jm+l+1]+1)}\,\,(k=1,2,\cdots)$$
and an empty product is assumed to be equal to one.
\end{definition}


\section{Existence, Uniqueness and Continuous Dependence}
The forthcoming theorem  establishes the existence of solution along with the  interval of existence of the initial value problem (IVP) \eqref{eq1}-\eqref{eq2} using its equivalent  fractional Volterra integral equation(VIE)  in the weighted space $C_{1-\zeta ; \, \Psi }(\Lambda,\,\R)$.
\begin{theorem} [Existence and interval of existence]\label{th3.1}
Let $\zeta=\eta+\nu(1-\eta), 0<\eta<1$ and $0\leq\nu\leq 1$. Define
$$R_{0}=\left\{(t,y):a\leq t\leq a+\xi,\left|y-\mathcal{H}^{\Psi}_{\zeta}(t,a)\,y_a\right|\leq k \right\},~\xi>0,  ~k>0.$$
where, $\mathcal{H}^{\Psi}_{\zeta}(t,a):=\dfrac{( \Psi (t)-\Psi (a) ) ^{\zeta -1}}{\Gamma (\zeta)}$.~
Let $f:R_0\rightarrow\R$ is continuous function such that $f(\cdot\,,y(\cdot))\in C_{1-\zeta\,; \,\Psi}(\Lambda,\,\R)$ for every $y\in C_{1-\zeta\,; \,\Psi}(\Lambda,\,\R)$.
Let $\Psi\in C^1(\Lambda,\,\R)$ be an increasing bijective function with $\Psi'(t)\neq 0, $ for all $\,t \in \Lambda$ .
Then the IVP 
\eqref{eq1}-\eqref{eq2} 
possesses at least one solution $y$ in the space $C_{1-\zeta\,; \,\Psi} [a,a+\chi]$ of weighted functions,  where
$$
\chi=\min\left\{\xi,~\Psi^{-1}\left[\Psi(a)+\left(\frac{k\, \Gamma(\eta+\zeta)}{\Gamma(\zeta)\left\Vert f\right\Vert _{C_{1-\zeta \,; \,\Psi }\left[ a,a+\xi\right]}}\right)^\frac{1}{\eta}\right]-a\right\}. $$ 
\end{theorem}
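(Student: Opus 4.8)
The plan is to recast the Cauchy problem \eqref{eq1}--\eqref{eq2} as a fixed point equation for an integral operator and then invoke Schauder's theorem (Theorem~\ref{Schauder}). As a first step one uses the known equivalence (cf.\ \cite{Vanterler1,Vanterler2}) between the $\Psi$-Hilfer Cauchy problem and its fractional Volterra integral equation: a function $y\in C_{1-\zeta\,;\,\Psi}[a,a+\chi]$ solves \eqref{eq1}--\eqref{eq2} on $[a,a+\chi]$ if and only if
\begin{equation*}
y(t)=\mathcal{H}^{\Psi}_{\zeta}(t,a)\,y_a+\frac{1}{\Gamma(\eta)}\int_a^t\mathrm{Q}^{\eta}_{\Psi}(t,\sigma)\,f\big(\sigma,y(\sigma)\big)\,d\sigma,\qquad t\in[a,a+\chi].
\end{equation*}
Accordingly, on the Banach space $\mathcal{X}=C_{1-\zeta\,;\,\Psi}[a,a+\chi]$ I would introduce the operator
\begin{equation*}
(\mathcal{A}y)(t)=\mathcal{H}^{\Psi}_{\zeta}(t,a)\,y_a+\mathcal{I}_{a+}^{\eta\,;\,\Psi}f(t,y(t)),
\end{equation*}
and work inside the nonempty, convex, closed and bounded set
\begin{equation*}
\mathcal{U}=\Big\{\,y\in\mathcal{X}:\ \big\|\,y-\mathcal{H}^{\Psi}_{\zeta}(\cdot,a)\,y_a\,\big\|_{C_{1-\zeta\,;\,\Psi}[a,a+\chi]}\leq k\,\Big\},
\end{equation*}
which is the weighted-space counterpart of the rectangle $R_0$ and on which $f(\cdot,y(\cdot))$ lies in $C_{1-\zeta\,;\,\Psi}$ with norm at most $\|f\|_{C_{1-\zeta\,;\,\Psi}[a,a+\xi]}$.

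The second step — the one that pins down the length $\chi$ of the existence interval — is to verify $\mathcal{A}(\mathcal{U})\subseteq\mathcal{U}$. For $y\in\mathcal{U}$ we have $(\mathcal{A}y)(t)-\mathcal{H}^{\Psi}_{\zeta}(t,a)\,y_a=\mathcal{I}_{a+}^{\eta\,;\,\Psi}f(t,y(t))$, so Lemma~\ref{lema3} applied to $h=f(\cdot,y(\cdot))$, whose weighted bound does not exceed $\|f\|_{C_{1-\zeta\,;\,\Psi}[a,a+\xi]}$, gives
\begin{equation*}
\big\|\,(\mathcal{A}y)-\mathcal{H}^{\Psi}_{\zeta}(\cdot,a)\,y_a\,\big\|_{C_{1-\zeta\,;\,\Psi}[a,a+\chi]}\leq \|f\|_{C_{1-\zeta\,;\,\Psi}[a,a+\xi]}\,\frac{\Gamma(\zeta)}{\Gamma(\zeta+\eta)}\,\big(\Psi(a+\chi)-\Psi(a)\big)^{\eta}.
\end{equation*}
Requiring the right-hand side to be $\leq k$ and solving for $\chi$ — here I use that $\Psi$ is increasing and bijective, so that $\Psi^{-1}$ exists and is monotone — produces precisely the bound $\Psi^{-1}\!\big[\Psi(a)+\big(k\,\Gamma(\eta+\zeta)/(\Gamma(\zeta)\|f\|)\big)^{1/\eta}\big]-a$; intersecting with the a priori constraint $t\le a+\xi$ yields the stated $\chi=\min\{\cdots\}$, and the self-map property follows.

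Third, I would show that $\mathcal{A}\colon\mathcal{U}\to\mathcal{U}$ is completely continuous. Continuity: if $y_n\to y$ in $\mathcal{X}$, then by uniform continuity of $f$ and the standing hypothesis $f(\cdot,y(\cdot))\in C_{1-\zeta\,;\,\Psi}$ one gets $f(\cdot,y_n(\cdot))\to f(\cdot,y(\cdot))$ in $C_{1-\zeta\,;\,\Psi}$, and then $\mathcal{A}y_n\to\mathcal{A}y$ since $\mathcal{I}_{a+}^{\eta\,;\,\Psi}$ is bounded on $C_{1-\zeta\,;\,\Psi}$ (Lemma~\ref{lema3}). Relative compactness: as $\mathcal{U}$ is bounded it suffices to show $\mathcal{A}(\mathcal{U})$ is relatively compact, which I would do by passing to the functions $t\mapsto\big(\Psi(t)-\Psi(a)\big)^{1-\zeta}(\mathcal{A}y)(t)$ in $C[a,a+\chi]$: these are uniformly bounded by the estimate above, and for equicontinuity one splits the increment over $t_1<t_2$ into the contribution of the term $\big(\Psi(t)-\Psi(a)\big)^{1-\zeta}\mathcal{H}^{\Psi}_{\zeta}(t,a)y_a\equiv y_a/\Gamma(\zeta)$ (a constant, hence trivially equicontinuous) and the contribution of $\big(\Psi(t)-\Psi(a)\big)^{1-\zeta}\mathcal{I}_{a+}^{\eta\,;\,\Psi}f(t,y(t))$, bounding the latter uniformly in $y\in\mathcal{U}$ by $\|f\|_{C_{1-\zeta\,;\,\Psi}}$ times a quantity tending to $0$ as $t_2\to t_1$, after which Arzel\`a--Ascoli applies. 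I expect this equicontinuity estimate near $t=a$ to be the main obstacle, since there both the weight $\big(\Psi(t)-\Psi(a)\big)^{1-\zeta}$ and the kernel $\mathrm{Q}^{\eta}_{\Psi}$ are singular; the standard remedy is the substitution $\sigma=a+s\,(t-a)$ (equivalently the Beta-integral computation underlying Lemma~\ref{lema2}), which collapses the product of singular factors into a convergent Beta integral with bound independent of $t$, giving uniform continuity of $t\mapsto\big(\Psi(t)-\Psi(a)\big)^{1-\zeta}\mathcal{I}_{a+}^{\eta\,;\,\Psi}h(t)$ for $h\in C_{1-\zeta\,;\,\Psi}$.

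Finally, Schauder's theorem (Theorem~\ref{Schauder}) applied to $\mathcal{A}$ on $\mathcal{U}$ yields a fixed point $y\in\mathcal{U}\subseteq C_{1-\zeta\,;\,\Psi}[a,a+\chi]$, which by the equivalence of the first step is a solution of \eqref{eq1}--\eqref{eq2} on $[a,a+\chi]$.
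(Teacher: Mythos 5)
Your proposal follows essentially the same route as the paper: the same equivalent Volterra integral equation, the same operator $\mathcal{A}$ on the same ball $\mathcal{U}$, the same self-map estimate via Lemma \ref{lema3} that forces the stated $\chi$, continuity from uniform continuity of $f$ on the compact set $R_0$, and compactness via Arzel\`a--Ascoli feeding into Schauder's theorem. The one point where you improve on the paper is the equicontinuity step: you correctly apply Arzel\`a--Ascoli to the weighted functions $t\mapsto(\Psi(t)-\Psi(a))^{1-\zeta}(\mathcal{A}y)(t)$, which is what compactness in $C_{1-\zeta\,;\,\Psi}$ actually requires, whereas the paper estimates the unweighted increment $|\mathcal{A}y(t_2)-\mathcal{A}y(t_1)|$, whose leading term $\frac{|y_a|}{\Gamma(\zeta)}\left|(\Psi(t_2)-\Psi(a))^{\zeta-1}-(\Psi(t_1)-\Psi(a))^{\zeta-1}\right|$ is not uniformly small near $t=a$ when $\zeta<1$.
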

\begin{proof}
The equivalent fractional VIE to the Cauchy problem \eqref{eq1}-\eqref{eq2} in the  space $C_{1-\zeta\,; \,\Psi}(\Lambda,\,\R)$ is derived in \cite{Vanterler2} and it is given by
\begin{equation}\label{eq3}
y (t) =\mathcal{H}^{\Psi}_{\zeta}(t,a)\,y_{a}+\frac{1}{\Gamma ( \eta ) }\int_{a}^{t}\mathrm{Q}^{\eta\,;\,\psi}(t,\sigma)f(\sigma,y(\sigma)) \,d\sigma, ~t \in \Lambda.
\end{equation}
Consider the set defined by,
$$
\mathcal{U}=\left\{y\in C_{1-\zeta \,; \,\Psi }\left[ a,a+\chi\right]:\left\Vert y-\mathcal{H}^{\Psi}_{\zeta}(\cdot,a)\,y_a \right\Vert_{C_{1-\zeta \,; \,\Psi }\left[ a,a+\chi\right]}\leq k\right\}.
$$

Define $\tilde{y}(t)=\mathcal{H}^{\Psi}_{\zeta}(t,a)\,y_{a}, ~t \in [ a,a+\chi]$. Then $(\Psi(t)-\Psi(a))^{1-\zeta}\, \tilde{y}(t)=\dfrac{y_a}{\Gamma(\zeta)}\in C[a,a+\chi]$ and hence  we have, $\tilde{y}(t)\in C_{1-\zeta \,; \,\Psi }\left[ a,a+\chi\right] $. Further, 
$$
\left\Vert \tilde{y}-\mathcal{H}^{\Psi}_{\zeta}(\cdot,a)\,y_a\right\Vert_{C_{1-\zeta \,; \,\Psi }\left[ a,a+\chi\right]}=0\leq k.
$$

Thus, $\tilde{y}\in \mathcal{U}$ and hence $\mathcal{U}$ is non-empty set. Clearly, $\mathcal{U}$ is a convex, bounded and closed subset 
of space $C_{1-\zeta \,; \,\Psi }\left[ a,a+\chi\right].$ 
We define an operator $\mathcal{A}$ on the set $\mathcal{U}$ by $$\mathcal{A}y(t)=\mathcal{H}^{\Psi}_{\zeta}(t,a)\,y_{a}+\frac{1}{\Gamma ( \eta ) }\int_{a}^{t}\mathrm{Q}^{\eta\,;\,\psi}(t,\sigma)f(\sigma,y(\sigma))\, \,d\sigma.$$

By definition of an operator $\mathcal{A}$, the equation \eqref{eq3} can be written as $$ y=\mathcal{A}y.$$ We analyze the properties of $\mathcal{A}$  so that it admit at least one fixed point.

First we show that, $\mathcal{A}\,\mathcal{U} \subseteq \mathcal{U}$. In the view of Lemma \ref{lema1},  $\Psi$-Riemann-Liouville fractional integral operator $\mathcal{I}_{a+}^{\eta \, ;\,\Psi }\left( \cdot \right) $ maps $C_{1-\zeta \, ;\,\Psi }\left[ a,a+\chi\right] $ to $C_{1-\zeta \, ;\,\Psi }\left[
a,a+\chi\right]$, and hence $\mathcal{I}_{a +}^{\eta\,;\, \Psi} f(\cdot,y(\cdot)) \in C_{{1-\zeta};\, \Psi}[a,a+\chi]$ for any 
$y\in C_{1-\zeta \,; \,\Psi }\left[ a,a+\chi\right]$.
Further, $$(\Psi (t)-\Psi (a) )^{1-\zeta }\,\mathcal{H}^{\Psi}_{\zeta}(t,a)\,y_{a}=\frac{y_{a}}{\Gamma ( \zeta )}\in  C[a,a+\chi],$$
and hence $\mathcal{H}^{\Psi}_{\zeta}(t,a)\,y_{a} \in C_{1-\zeta \,; \,\Psi }\left[ a,a+\chi\right]$. 

From the above arguments it follows that $ \mathcal{A}y\in C_{1-\zeta \,; \,\Psi }\left[ a,a+\chi\right]$. Next, for any $y\in \mathcal{U}$ and any $x$,~  $ a< x\leq \chi$, we have
\begin{align*}
&\left\Vert \mathcal{A}y -\mathcal{H}^{\Psi}_{\zeta}(\cdot,a)\,y_a\right\Vert _{_{C_{1-\zeta\,; \,\Psi }\left[ a,a+x\right] }}\notag\\
&=\underset{t\in \left[ a,a+x\right] }{\max }\left|(\Psi (t)-\Psi (a) )^{1-\zeta } \frac{1}{\Gamma ( \eta )}\int_{a}^{t}\mathrm{Q}^{\eta\,;\,\psi}(t,\sigma)(\Psi (\sigma)-\Psi (a) )^{\zeta-1 }\times \right.\notag\\
&\qquad\left.(\Psi (\sigma)-\Psi (a) )^{1-\zeta }f(\sigma,y(\sigma)) \,d\sigma\right|\notag\\
&\leq(\Psi (a+x)-\Psi (a) )^{1-\zeta }\frac{1}{\Gamma(\eta)}\int_{a}^{a+x}\mathrm{Q}^{\eta\,;\,\psi}(a+x,\sigma)(\Psi (\sigma)-\Psi (a) )^{\zeta-1 }\times \notag\\
&\qquad\underset{\varsigma\in \left[ a,\, \sigma\right] }{\max }\left|(\Psi (\varsigma)-\Psi (a) )^{1-\zeta }f(\varsigma,y(\varsigma))\right|\,d\sigma\notag\\
&\leq \left\Vert f \right\Vert_{C_{1-\zeta \,; \,\Psi }\left[ a,a+x\right]}(\Psi (a+x)-\Psi (a) )^{1-\zeta }\,\,\mathcal{I}_{a+}^{\eta\,; \,\Psi}(\Psi(a+x)-\Psi(a))^{\zeta-1}\notag\\
&\leq \left\Vert f \right\Vert_{C_{1-\zeta \,; \,\Psi }\left[ a,a+x\right]}(\Psi (a+x)-\Psi (a) )^{1-\zeta }\frac{\Gamma(\zeta)}{\Gamma(\eta+\zeta)}(\Psi (a+x)-\Psi (a) )^{\eta+\zeta-1 }\notag\\
&=\left\Vert f \right\Vert_{C_{1-\zeta \,; \,\Psi }\left[a,a+x\right]}\frac{\Gamma(\zeta)}{\Gamma(\eta+\zeta)}(\Psi (a+x)-\Psi (a) )^{\eta }.
\end{align*}

Taking $x \rightarrow \chi$ we get,
\begin{equation} \label{e4}
\left\Vert \mathcal{A}y -\mathcal{H}^{\Psi}_{\zeta}(\cdot,a)\,y_a \right\Vert _{_{C_{1-\zeta\,; \,\Psi }\left[ a,a+\chi\right] }} \leq \left\Vert f \right\Vert_{C_{1-\zeta \,; \,\Psi }\left[a,a+\chi\right]}\frac{\Gamma(\zeta)}{\Gamma(\eta+\zeta)}(\Psi (a+\chi)-\Psi (a) )^{\eta }
\end{equation}

Since,
$$
\chi=\min\left\{\xi\,\,,\Psi^{-1}\left[\Psi(a)+\left(\frac{k\, \Gamma(\eta+\zeta)}{\Gamma(\zeta)\left\Vert f\right\Vert _{C_{1-\zeta \,; \,\Psi }\left[ a,a+\xi\right]}}\right)^\frac{1}{\eta}\right]-a\right\} 
$$
we have
$$
\chi\leq\Psi^{-1}\left[\Psi(a)+\left(\frac{k\,\Gamma(\eta+\zeta)}{\Gamma(\zeta)\left\Vert f\right\Vert_{C_{1-\zeta\,; \,\Psi}\left[a,a+\xi\right]}}\right)^\frac{1}{\eta}\right]-a.
$$

Further, since $\Psi\in C^1\left( \Lambda,\,\R\right)  $ is a bijective function, $\Psi^{-1}: \R \to \Lambda$ exists and from above inequality, we have
\begin{equation}\label{e5}
(\Psi(a+\chi)-\Psi(a))^\eta\leq\frac{k\,\Gamma(\eta+\zeta)}{\Gamma(\zeta)\left\Vert f\right\Vert _{C_{1-\zeta\, ; \,\Psi }\left[ a,a+\xi\right]}}.
\end{equation}

Using Eq.\eqref{e5} in Eq.\eqref{e4}, we get
$$\left\Vert \mathcal{A}y -\mathcal{H}^{\Psi}_{\zeta}(\cdot,a)\,y_a\right\Vert _{_{C_{1-\zeta\,; \,\Psi }\left[ a,a+\chi\right] }}\leq k.$$
We have proved that $\mathcal{A}y\in \mathcal{U} \,\,\text{for any}\,\, y\in \mathcal{U}$. This proves $\mathcal{A} \,\,\text{maps}\,\, \mathcal{U} \,\,\text{to itself}$.  

Next, we prove that $\mathcal{A}$ is a continuous operator.  Let any $\varepsilon>0$. Since $f:R_0\rightarrow\R$ is continuous and $R_{0}$  is compact set, $f$  is uniformly continuous on $R_{0}$. Thus, there exists $\tilde{\delta}>0$ such that 
\begin{equation*}
\left|f(t,y)-f(t,z)\right|<\frac{\varepsilon\,\, \Gamma(\eta+1)}{(\Psi(a+\chi)-\Psi(a))^{\eta-\zeta+1}},~~ \text{whenever}~ |y-z|<\tilde{\delta}.
\end{equation*}

Since, $\Psi$ is continuous on compact set $\Lambda$, we can choose $\delta>0$ such that $( \Psi (t)-\Psi (a) ) ^{1-\zeta }|y-z|<\delta$. Therefore, we have
\begin{align} \label{eq34}
\left|f(t,y)-f(t,z)\right|<\frac{\varepsilon\,\, \Gamma(\eta+1)}{(\Psi(a+\chi)-\Psi(a))^{\eta-\zeta+1}},~ \text{whenever}~( \Psi (t)-\Psi (a) ) ^{1-\zeta }|y-z|<\delta.
\end{align}

Let any $y,z\in \mathcal{U} $ such that  $\left\Vert y-z \right\Vert_{C_{1-\zeta\, ; \,\Psi }\left[ a,a+\chi\right]}<\delta$. Then in the view of \eqref{eq34},  we have
$$\left|f(t,y(t))-f(t,z(t))\right|<\frac{\varepsilon\,\, \Gamma(\eta+1)}{(\Psi(a+\chi)-\Psi(a))^{\eta-\zeta+1}}, ~t\in [ a,a+\chi].$$

Therefore,
\begin{align*}
&\left\Vert \mathcal{A}y-\mathcal{A}z \right\Vert_{C_{1-\zeta\, ; \,\Psi }\left[ a,a+\chi\right]}\\
&\leq\underset{t\in \left[ a,a+\chi\right] }{\max }(\Psi (t)-\Psi (a) )^{1-\zeta }\frac{1}{\Gamma ( \eta )} \int_{a}^{t}\mathrm{Q}^{\eta\,;\,\psi}(t,\sigma)\left|f(\sigma,y(\sigma))-f(\sigma,z(\sigma))\right| \,d\sigma\\
&<\frac{\varepsilon\,\, \Gamma(\eta+1)}{(\Psi(a+\chi)-\Psi(a))^{\eta-\zeta+1}}\,\,\,\underset{t\in \left[ a,a+\chi\right] }{\max }(\Psi (t)-\Psi (a) )^{1-\zeta }\frac{1}{\Gamma ( \eta )} \int_{a}^{t}\mathrm{Q}^{\eta\,;\,\psi}(t,\sigma)\, \,d\sigma\\
&=\frac{\varepsilon\,\, \Gamma(\eta+1)}{(\Psi(a+\chi)-\Psi(a))^{\eta-\zeta+1}}\,\,\,\underset{t\in \left[ a,a+\chi\right] }{\max }(\Psi (t)-\Psi (a) )^{1-\zeta }\frac{1}{\Gamma ( \eta+1 )} ( \Psi (t) -\Psi (a)) ^{\eta} \\
&\leq\frac{\varepsilon}{(\Psi(a+\chi)-\Psi(a))^{\eta-\zeta+1}}\,\,(\Psi(a+\chi)-\Psi(a))^{\eta-\zeta+1}\\
&=\varepsilon.
\end{align*}

Thus, the operator $\mathcal{A}$ is continuous on $\mathcal{U}$. Now, for any $z\in \mathcal{A}(\mathcal{U})=\{\mathcal{A}y: y\in \mathcal{U}\}$ and any $x$ such that $a<x\leq \chi$, we have
\begin{align*}
&\left\Vert z \right\Vert _{_{C_{1-\zeta\,; \,\Psi }\left[ a,a+x\right] }}
=\left\Vert \mathcal{A}y  \right\Vert _{_{C_{1-\zeta\,; \,\Psi }\left[ a,a+x\right] }}\\
&\leq\frac{y_a}{\Gamma(\zeta)}+\underset{t\in \left[ a,a+x\right] }{\max }\left|\frac{(\Psi (t)-\Psi (a) )^{1-\zeta }}{\Gamma ( \eta ) }\int_{a}^{t}\mathrm{Q}^{\eta\,;\,\psi}(t,\sigma)\times \right.\\
&\qquad(\Psi (\sigma)-\Psi (a) )^{\zeta-1 }(\Psi (\sigma)-\Psi (a) )^{1-\zeta }f(\sigma,y(\sigma)) \,d\sigma\Big|\\
&\leq\frac{y_a}{\Gamma(\zeta)}+\frac{(\Psi (a+x)-\Psi (a) )^{1-\zeta }}{\Gamma ( \eta ) }\int_{a}^{a+x}\mathrm{Q}^{\eta\,;\,\psi}(a+x,\sigma)\times \\
&\qquad(\Psi (\sigma)-\Psi (a) )^{\zeta-1 }\underset{\varsigma \in [a,\sigma]}{\max}\left|(\Psi (\varsigma)-\Psi (a) )^{1-\zeta }f(\varsigma,y(\varsigma)) \right|\,d\sigma\\
&\leq\frac{y_a}{\Gamma(\zeta)}+{\left\Vert f \right\Vert _{_{C_{1-\zeta\,; \,\Psi }\left[ a,a+x\right] }}(\Psi (a+x)-\Psi (a) )^{1-\zeta }}\,\,\mathcal{I}_{a+}^{\eta\,; \,\Psi}(\Psi(a+x)-\Psi(a))^{\zeta-1}\\
&=\frac{y_a}{\Gamma(\zeta)}+\left\Vert f \right\Vert _{C_{1-\zeta\,; \,\Psi }\left[ a,a+x\right] }\,\,\left(\Psi (a+x)-\Psi (a)\right )^{1-\zeta }\frac{\Gamma(\zeta)}{\Gamma(\eta+\zeta)}(\Psi (a+x)-\Psi (a) )^{\eta+\zeta-1 }\\
&=\frac{y_a}{\Gamma(\zeta)}+\left\Vert f \right\Vert _{C_{1-\zeta\,; \,\Psi }\left[ a,a+x\right] }\,\,\frac{\Gamma(\zeta)}{\Gamma(\eta+\zeta)}(\Psi (a+x)-\Psi (a) )^{\eta }.
\end{align*}

This shows that $\mathcal{A}(\mathcal{U})$  is uniformly bounded subset of $C_{1-\zeta\,; \,\Psi }\left[ a,a+\chi\right] $  and hence it is point wise bounded also.  Next, we prove that $\mathcal{A}(\mathcal{U})$  is equicontinuous. For any $t_1,t_2$ such that $a<t_1\leq t_2 \leq a+\chi$ and any $y\in \mathcal{U},$ we have
\begin{align*}
&|\mathcal{A}y(t_2)-\mathcal{A}y(t_1)|\\
&\leq\left|\frac{y_a}{\Gamma(\zeta)}\left\{\left( \Psi (t_2)-\Psi (a) \right) ^{\zeta -1}-( \Psi (t_1)-\Psi (a) ) ^{\zeta -1}\right\} \right|\\
& \,\,+\left|\frac{1}{\Gamma(\eta)}\int_{a}^{t_2}\mathrm{Q}^{\eta\,;\,\psi}(t_2,\sigma)\left|f(\sigma,y(\sigma))\right|\,d\sigma-\frac{1}{\Gamma(\eta)}\int_{a}^{t_1}\mathrm{Q}^{\eta\,;\,\psi}(t_1,\sigma)\left|f(\sigma,y(\sigma))\right|\,d\sigma\right|\\
&\leq\left|\frac{y_a}{\Gamma(\zeta)}\left\{\left( \Psi (t_2)-\Psi (a) \right) ^{\zeta -1}-( \Psi (t_1)-\Psi (a) ) ^{\zeta -1}\right\} \right|\\
&\qquad+\left|\frac{1}{\Gamma(\eta)}\int_{a}^{t_2}\mathrm{Q}^{\eta\,;\,\psi}(t_2,\sigma)(\Psi (\sigma) -\Psi (a)) ^{\zeta-1}\underset{\varsigma\in [a,\sigma]}{\max}\left|( \Psi (\varsigma) -\Psi (a)) ^{1-\zeta}f(\varsigma,y(\varsigma))\right|\,d\sigma\right.\\
&\qquad\left.-\frac{1}{\Gamma(\eta)}\int_{a}^{t_1}\mathrm{Q}^{\eta\,;\,\psi}(t_1,\sigma)(\Psi (\sigma) -\Psi (a)) ^{\zeta-1} \underset{\varsigma\in [a,\sigma]}{\max}\left|( \Psi (\varsigma) -\Psi (a)) ^{1-\zeta}f(\varsigma,y(\varsigma))\right|\,d\sigma\right|\\
&\leq\left|\frac{y_a}{\Gamma(\zeta)}\left\{\left( \Psi (t_2)-\Psi (a) \right) ^{\zeta -1}-( \Psi (t_1)-\Psi (a) ) ^{\zeta -1}\right\} \right|\\
&\qquad+\left\Vert f \right\Vert_{C_{1-\zeta \,; \,\Psi }[a,a+\chi]}\left|\mathcal{I}_{a+}^{\eta\,; \,\Psi}(\Psi(t_2)-\Psi(a))^{\zeta-1}-\mathcal{I}_{a+}^{\eta\,; \,\Psi}(\Psi(t_1)-\Psi(a))^{\zeta-1}\right|\\
&=\frac{y_a}{\Gamma(\zeta)}\left|\left( \Psi (t_2)-\Psi (a) \right) ^{\zeta -1}-( \Psi (t_1)-\Psi (a) ) ^{\zeta -1} \right|\\
&\qquad+\left\Vert f \right\Vert_{C_{1-\zeta \,; \,\Psi }[a,a+\chi]}\frac{\Gamma(\zeta)}{\Gamma(\eta+\zeta)}\left|(\Psi (t_2)-\Psi (a) )^{\eta+\zeta-1 }-(\Psi (t_1)-\Psi (a) )^{\eta+\zeta-1 }\right|.
\end{align*}

Observe that, the right hand  part in the preceding  inequality is free from $y$. Thus, using the continuity of $\Psi$, $|t_2-t_1|\rightarrow 0  $  implies that  $|\mathcal{A}y(t_2)-\mathcal{A}y(t_1)|\rightarrow 0$. This proves that $\mathcal{A}(\mathcal{U})$  is equicontinuous. In the view of  Arzela-Ascoli Theorem \cite{Diethelm1}, it follows that $\mathcal{A}(\mathcal{U})$ is relatively compact. Therefore, by Schauder's fixed point Theorem \ref{Schauder},  operator $\mathcal{A}$ has at least one fixed point $y \in C_{1-\zeta\,; \,\Psi }\left[ a,a+\chi\right] $ which is the solution of IVP \eqref{eq1}--\eqref{eq2}.
\end{proof}

In the next theorem, using  Lipschitz condition on $f$ and  the Weissinger theorem,  we establish another existence result for the IVP \eqref{eq1}--\eqref{eq2}, which gives in addition the uniqueness of solution also. 
\begin{theorem}[Uniqueness] \label{th4.1}
Let $\zeta=\eta+\nu(1-\eta),0<\eta<1$, $0\leq\nu\leq 1$  and 
$$R_{0}=\left\{(t,y):a\leq t\leq a+\xi,\left|y-\mathcal{H}^{\Psi}_{\zeta}(t,a)\,y_{a}\right|\leq k \right\},~\xi>0,  ~k>0.$$
where, $\mathcal{H}^{\Psi}_{\zeta}(t,a)=\frac{\left( \Psi (t)-\Psi (a) \right) ^{\zeta-1}}{\Gamma(\zeta)}$.
Let $f:R_0\rightarrow\R$ be a function such that $f(\cdot\,,y(\cdot))\in C_{1-\zeta\,; \,\Psi} (\Lambda,\,\R)$ for every $y\in C_{1-\zeta\,; \,\Psi}(\Lambda,\,\R)$ and  satisfies the Lipschitz condition 
$$
\left | f(t,u)-f(t,v)\right|\leq L|u-v|,\,\,u,v\in \R, ~L>0.
$$

Let $\Psi\in C^1(\Lambda,\,\R)$ be an increasing bijective function with $\Psi'(t)\neq 0 $ for all $t\in \Lambda$. Then, the IVP 	
\eqref{eq1}-\eqref{eq2} has a unique solution
$y$ in the weighted space $C_{1-\zeta\,; \,\Psi} [a,a+\chi]$,  where
$$
\chi=\min\left\{\xi,~\Psi^{-1}\left[\Psi(a)+\left(\frac{k\, \Gamma(\eta+\zeta)}{\Gamma(\zeta)\left\Vert f\right\Vert _{C_{1-\zeta \,; \,\Psi }\left[ a,a+\xi\right]}}\right)^\frac{1}{\eta}\right]-a\right\}. 
$$
\end{theorem}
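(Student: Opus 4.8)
The plan is to recast the Cauchy problem \eqref{eq1}--\eqref{eq2} once more, via the equivalent VIE \eqref{eq3}, as the fixed point equation $y=\mathcal{A}y$ for the operator
$$
\mathcal{A}y(t)=\mathcal{H}^{\Psi}_{\zeta}(t,a)\,y_{a}+\mathcal{I}_{a+}^{\eta\,;\,\Psi}\bigl(f(\cdot,y(\cdot))\bigr)(t),
$$
and to apply Weissinger's theorem (Theorem \ref{Weissinger}) in place of Schauder's. The carrier set is again $\mathcal{U}=\{y\in C_{1-\zeta\,;\,\Psi}[a,a+\chi]:\|y-\mathcal{H}^{\Psi}_{\zeta}(\cdot,a)y_{a}\|_{C_{1-\zeta\,;\,\Psi}[a,a+\chi]}\le k\}$, a closed subset of the Banach space $C_{1-\zeta\,;\,\Psi}[a,a+\chi]$ and hence a nonempty complete metric space for the metric induced by the norm \eqref{space1}; the computation in the proof of Theorem \ref{th3.1} showing $\mathcal{A}\,\mathcal{U}\subseteq\mathcal{U}$ uses only the bound on $\|f\|$ and the definition of $\chi$, so it carries over verbatim. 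By Lemma \ref{lema1} together with the hypothesis $f(\cdot,y(\cdot))\in C_{1-\zeta\,;\,\Psi}(\Lambda,\R)$, the operator $\mathcal{A}$ in fact maps the whole space $C_{1-\zeta\,;\,\Psi}[a,a+\chi]$ into itself, which will yield uniqueness in that space and not merely in $\mathcal{U}$.

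The substantive point is the iterated Lipschitz estimate demanded by Theorem \ref{Weissinger}. I would establish, by induction on $j\in\N_{0}$, the pointwise bound
$$
\bigl|\mathcal{A}^{j}u(t)-\mathcal{A}^{j}v(t)\bigr|\ \le\ \frac{L^{j}\,\Gamma(\zeta)}{\Gamma(j\eta+\zeta)}\bigl(\Psi(t)-\Psi(a)\bigr)^{j\eta+\zeta-1}\,\|u-v\|_{C_{1-\zeta\,;\,\Psi}[a,a+\chi]},\qquad a<t\le a+\chi,
$$
valid for all $u,v$ in the space. The base case $j=0$ is just the definition of the weighted norm. For the inductive step one writes $\mathcal{A}^{j+1}u-\mathcal{A}^{j+1}v=\mathcal{I}_{a+}^{\eta\,;\,\Psi}\bigl[f(\cdot,\mathcal{A}^{j}u(\cdot))-f(\cdot,\mathcal{A}^{j}v(\cdot))\bigr]$, bounds the integrand by the Lipschitz condition on $f$ and the induction hypothesis, and evaluates $\mathcal{I}_{a+}^{\eta\,;\,\Psi}\bigl(\Psi(\cdot)-\Psi(a)\bigr)^{j\eta+\zeta-1}$ by Lemma \ref{lema2} with $\delta=j\eta+\zeta>0$; the factor $\Gamma(j\eta+\zeta)$ cancels and the estimate reproduces itself with $j$ replaced by $j+1$. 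Multiplying by $\bigl(\Psi(t)-\Psi(a)\bigr)^{1-\zeta}$ and taking the maximum over $[a,a+\chi]$ gives $d(\mathcal{A}^{j}u,\mathcal{A}^{j}v)\le\eta_{j}\,d(u,v)$ with
$$
\eta_{j}:=\Gamma(\zeta)\,\frac{\bigl(L\,(\Psi(a+\chi)-\Psi(a))^{\eta}\bigr)^{j}}{\Gamma(j\eta+\zeta)}.
$$

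Finally, since $\eta>0$ and $\zeta>0$, the series $\sum_{j=0}^{\infty}\eta_{j}=\Gamma(\zeta)\,\mathcal{E}_{\eta,\,\zeta}\!\bigl(L\,(\Psi(a+\chi)-\Psi(a))^{\eta}\bigr)$ is a value of the entire two-parameter Mittag--Leffler function and therefore converges. Hence all hypotheses of Theorem \ref{Weissinger} hold (on $\mathcal{U}$, and equally on the whole space), so $\mathcal{A}$ possesses a unique fixed point, which is precisely the unique solution of \eqref{eq1}--\eqref{eq2} in $C_{1-\zeta\,;\,\Psi}[a,a+\chi]$, and for every $y_{0}$ the successive approximations $\mathcal{A}^{j}y_{0}$ converge to it. I expect the only real work to be the induction above: keeping the Gamma factors straight through the iterated $\Psi$-integrals and recognizing the resulting coefficients as a Mittag--Leffler series, which renders the summability condition immediate; the self-mapping and completeness inputs are inherited directly from the proof of Theorem \ref{th3.1}.
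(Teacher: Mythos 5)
Your proposal is correct and follows essentially the same route as the paper: the operator $\mathcal{A}$ and the set $\mathcal{U}$ from Theorem \ref{th3.1}, an induction yielding $\|\mathcal{A}^{j}u-\mathcal{A}^{j}v\|\le\frac{\Gamma(\zeta)}{\Gamma(j\eta+\zeta)}[L(\Psi(a+\chi)-\Psi(a))^{\eta}]^{j}\|u-v\|$, and Weissinger's theorem with the weights summing to a value of $\mathcal{E}_{\eta,\zeta}$. Your only (harmless, arguably cleaner) deviation is running the induction on the pointwise bound $|\mathcal{A}^{j}u(t)-\mathcal{A}^{j}v(t)|\le\frac{L^{j}\Gamma(\zeta)}{\Gamma(j\eta+\zeta)}(\Psi(t)-\Psi(a))^{j\eta+\zeta-1}\|u-v\|$ rather than on the family of sliding-window norms $\|\cdot\|_{C_{1-\zeta;\Psi}[a,a+x]}$ as the paper does.
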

\begin{proof}  Consider the operator $\mathcal{A}$ used in Theorem \ref{th3.1}. It is as of now demonstrated that $\mathcal{A}:\mathcal{U}\rightarrow \mathcal{U}$  is continuous on closed, convex and bounded subset $\mathcal{U}$ of Banach space $C_{1-\zeta\,; \,\Psi }\left[ a,a+\chi\right]$. We now prove that, for every $n\in \N$ and every $x$ such that $a<x\leq \chi$, we have
\begin{equation}\label{e6}
\left\Vert \mathcal{A}^{n}y -\mathcal{A}^{n}z \right\Vert _{_{C_{1-\zeta\,; \,\Psi }\left[ a,a+x\right] }}\leq\frac{\Gamma(\zeta)}{\Gamma(n \eta+\zeta)}\left[L\left(\Psi (a+x)-\Psi (a) \right)^{\eta } \right]^n\left\Vert y -z \right\Vert _{_{C_{1-\zeta\,; \,\Psi }\left[ a,a+x\right]}}.
\end{equation} 

We provide the proof of above inequality by using mathematical induction. By using Lipschitz condition on $f$, for any $x \in (a,\chi]$, we have 
\begin{align*}
&\left\Vert \mathcal{A}y -\mathcal{A}z\right\Vert_{{C_{1-\zeta\,; \,\Psi}\left[a,a+x\right]}}\\
&\leq(\Psi (a+x)-\Psi (a) )^{1-\zeta }\frac{1}{\Gamma ( \eta ) }\int_{a}^{a+x}\mathrm{Q}^{\eta\,;\,\psi}(a+x,\sigma)\left|f(\sigma,y(\sigma))-f(\sigma,z(\sigma))\right| \,d\sigma\\
&\leq L(\Psi (a+x)-\Psi (a) )^{1-\zeta }\frac{1}{\Gamma ( \eta ) }\int_{a}^{a+x}\mathrm{Q}^{\eta\,;\,\psi}(a+x,\sigma)\times  \\
&\qquad(\Psi (\sigma) -\Psi (a)) ^{\zeta-1}(\Psi (\sigma) -\Psi (a)) ^{1-\zeta}\left|y(\sigma)-z(\sigma)\right| \,d\sigma\\
&\leq L(\Psi (a+x)-\Psi (a) )^{1-\zeta }\frac{1}{\Gamma ( \eta ) }\int_{a}^{a+x}\mathrm{Q}^{\eta\,;\,\psi}(a+x,\sigma)\times  \\
&\qquad(\Psi (\sigma) -\Psi (a)) ^{\zeta-1}\underset{\varsigma\in \left[ a,\sigma\right] }{\max }\left|(\Psi (\varsigma) -\Psi (a)) ^{1-\zeta}[y(\varsigma)-z(\varsigma)]\right| \,d\sigma\\
&\leq L\left\Vert y-z \right\Vert_{C_{1-\zeta \,; \,\Psi }\left[ a,a+x\right]}(\Psi (a+x)-\Psi (a) )^{1-\zeta }\,\,\mathcal{I}_{a+}^{\eta\,; \,\Psi}(\Psi(a+x)-\Psi(a))^{\zeta-1}\\
&=\frac{\Gamma(\zeta)}{\Gamma(\eta+\zeta)}\left[L(\Psi (a+x)-\Psi (a) )^{\eta }\right]\left\Vert y-z \right\Vert_{C_{1-\zeta \,; \,\Psi }\left[ a,a+x\right]}.
\end{align*}

Hence, the inequality \eqref{e6} is true for $n=1$. Let us assume that, it is true for $n=k-1$. Now, we prove that inequality \eqref{e6} is also true for $n=k$. Then, we have
\begin{align*}
&\left\Vert \mathcal{A}^{k}y -\mathcal{A}^{k}z \right\Vert _{_{C_{1-\zeta\,; \,\Psi }\left[ a,a+x\right] }}\\
&=\underset{t\in \left[ a,a+x\right] }{\max }\left|(\Psi (t)-\Psi (a) )^{1-\zeta }\frac{1}{\Gamma ( \eta ) }\int_{a}^{t}\mathrm{Q}^{\eta\,;\,\psi}(t,\sigma)
\left[f\left(\sigma,\mathcal{A}^{k-1}y(\sigma)\right)-f\left(\sigma,\mathcal{A}^{k-1}z(\sigma)\right)\right] \,d\sigma\right|\\
&\leq L\,\,(\Psi (a+x)-\Psi (a) )^{1-\zeta }\frac{1}{\Gamma ( \eta ) }\int_{a}^{a+x}\mathrm{Q}^{\eta\,;\,\psi}(a+x,\sigma) \left|\mathcal{A}^{k-1}y(\sigma)-\mathcal{A}^{k-1}z(\sigma)\right|\,d\sigma\\
&\leq L\,\,(\Psi (a+x)-\Psi (a) )^{1-\zeta }\frac{1}{\Gamma ( \eta ) }\int_{a}^{a+x}\mathrm{Q}^{\eta\,;\,\psi}(a+x,\sigma)\times \\ 
& \qquad(\Psi (\sigma) -\Psi (a)) ^{\zeta-1}\left\Vert \mathcal{A}^{k-1}y-\mathcal{A}^{k-1}z \right\Vert _{_{C_{1-\zeta\,; \,\Psi }\left[ a,\sigma\right] }} \,d\sigma\\
&\leq L\,\,(\Psi (a+x)-\Psi (a) )^{1-\zeta }\frac{1}{\Gamma ( \eta ) }\int_{a}^{a+x}\mathrm{Q}^{\eta\,;\,\psi}(a+x,\sigma)(\Psi (\sigma) -\Psi (a)) ^{\zeta-1}\times \\
&\qquad \frac{\Gamma(\zeta)}{\Gamma((k-1) \eta+\zeta)}\left[L\left(\Psi (\sigma)-\Psi (a)\right)^{\eta } \right]^{k-1}\left\Vert y -z \right\Vert _{_{C_{1-\zeta\,; \,\Psi }\left[ a,\sigma\right] }}\,d\sigma\\
&\leq L^k\frac{\Gamma(\zeta)}{\Gamma((k-1) \eta+\zeta)}\,\,(\Psi (a+x)-\Psi (a) )^{1-\zeta }\left\Vert y -z \right\Vert _{_{C_{1-\zeta\,; \,\Psi }\left[ a,a+x\right] }} \times\\
&\qquad \mathcal{I}_{a+}^{\eta\,; \,\Psi}(\Psi(a+x)-\Psi(a))^{\eta(k-1)+\zeta-1}\\
&= L^k\frac{\Gamma(\zeta)}{\Gamma((k-1) \eta+\zeta)}\,\,(\Psi (a+x)-\Psi (a) )^{1-\zeta }\left\Vert y -z \right\Vert _{_{C_{1-\zeta\,; \,\Psi }\left[ a,a+x\right] }} \times\\
&\qquad\frac{\Gamma((k-1) \eta+\zeta)}{\Gamma(k\eta+\zeta)}(\Psi (a+x)-\Psi (a) )^{k \eta+\zeta-1}\\
&= \frac{\Gamma(\zeta)}{\Gamma(k \eta+\zeta)}\left[L(\Psi (a+x)-\Psi (a) )^\eta\right]^k\left\Vert y -z \right\Vert _{_{C_{1-\zeta\,; \,\Psi }\left[ a,a+x\right] }}.
\end{align*}

Hence, the inequality \eqref{e6} is true  for $n=k$. By  mathematical induction the proof of the inequality \eqref{e6} is concluded. Taking $x \to \chi$, we obtain
\begin{equation*}
\left\Vert \mathcal{A}^{n}y -\mathcal{A}^{n}z \right\Vert _{_{C_{1-\zeta; \,\psi }\left[ a,a+\chi\right] }}\leq\frac{\Gamma(\zeta)}{\Gamma(n \eta+\zeta)}\left[L\left(\psi (a+\chi)-\psi (a)\right)^{\eta } \right]^n\left\Vert y -z \right\Vert _{_{C_{1-\zeta; \,\psi }\left[ a,a+\chi\right]}}.
\end{equation*}

Note that, the operator $\mathcal{A}$ satisfy all the conditions  of the  Weissinger's theorem \ref{Weissinger}, with 
$$
\eta_n=\frac{\left[L\left(\Psi (a+\chi)-\Psi (a)\right)^{\eta } \right]^n}{\Gamma(n \eta+\zeta)}.
$$ 

Further, by definition of two parameter Mittag--Leffler function, we have
 $$\sum_{n=0}^{\infty}  \,\eta_n= \sum_{n=0}^{\infty}  \frac{\left[L(\Psi (a+\chi)-\Psi (a) )^{\eta } \right]^n}{\Gamma(n \eta+\zeta)}=\mathcal{E}_{\eta,\,\zeta}\,\left(L(\Psi (a+\chi)-\Psi (a) )^{\eta }\right),$$
 which is convergent series. Thus, by using Weissinger's fixed point theorem \ref{Weissinger}, operator $\mathcal{A}$ has a unique fixed point $y$ in   $C_{1-\zeta\,; \,\Psi }\left[ a,a+\chi\right] $, which is a unique solution of the IVP \eqref{eq1}-\eqref{eq2}.
\end{proof}

\begin{rem}
\begin{enumerate}
\item The results obtained by Diethelem and Ford {\rm \cite{Diethelm1}} can be regarded as the  particular cases of  {\rm Theorem \ref{th3.1}} and {\rm Theorem \ref{th4.1} }and can be derived by putting $\nu=1,~a=0 $ and  $\Psi(t)=t, ~t\in\Lambda$.

\item We have investigated in {\rm Theorem \ref{th3.1}} and {\rm Theorem \ref{th4.1}}, the existence and uniqueness of the Cauchy problem {\rm (\ref{eq1})}-{\rm(\ref{eq2})} involving the $\Psi$-Hilfer fractional derivative. One of the fundamental properties of the $\Psi$-Hilfer fractional derivative is the wide class of fractional derivatives that contain it as particular cases. Thus, in this sense, the respective results obtained by {\rm Theorem \ref{th3.1}} and {Theorem \ref{th4.1}}, are also valid for these fractional derivatives. For examples, some particular cases in the items as follows:

\begin{enumerate}
\item If we take  $\nu\rightarrow 1$ on both sides of  {\rm(\ref{eq1})}-{\rm(\ref{eq2})},it reduces to the Cauchy problem with $\Psi$-Caputo fractional derivative and consequently, the {\rm Theorem \ref{th3.1}} and {\rm Theorem \ref{th4.1}}, there are true;

\item  If we take $\nu\rightarrow 0$ on both sides of the {\rm(\ref{eq1})}-{\rm(\ref{eq2})}, we have the Cauchy problem with $\Psi$-Riemann-Liouville fractional derivative and consequently, the {\rm Theorem \ref{th3.1}} and {\rm Theorem \ref{th4.1}}, there are true;

\item Choose $\Psi(t)=t^{\rho}$ and take $\nu\rightarrow 1$ on both sides of  {\rm(\ref{eq1})}-{\rm(\ref{eq2})}, we have the Cauchy problem involving the Caputo-Katugampola fractional derivative and consequently, the {\rm Theorem \ref{th3.1}} and {\rm Theorem \ref{th4.1}}, there are true;

\item In cases where they involve the fractional derivatives of Hadamard, Caputo-Hadamard, Hilfer-Hadamard or any other that is related to Hadamard's fractional derivative, we have to impose the condition on the parameter $a>0$, since in these fractional derivatives they involve the function $\ln t$ and is not defined when $t=a=0$.
\end{enumerate}
\end{enumerate}
\end{rem}


Next, we prove the continuous dependence of solution for the Cauchy type problem  \eqref{eq1}-\eqref{eq2} via Weissinger's theorem.
\begin{theorem} [Continuous Dependence]\label{th5.1}
Let $\zeta =\eta +\nu \left( 1-\eta \right) $ where, $0<\eta <1$ and  $0\leq \nu \leq 1.$ Let $f:\left[ a,a +\chi \right] \times \mathbb{R}\rightarrow \mathbb{R}
$ be a function such that $f(\cdot\,,y(\cdot))\in C_{1-\zeta\,; \,\Psi} [a,a+\chi]$ for every $y\in C_{1-\zeta\,; \,\Psi} [a,a+\chi]$ and  satisfies the Lipschitz condition 
$$
\left | f(t,u)-f(t,v)\right|\leq L|u-v|,\,\,u,v\in \R, ~L>0.
$$

Let $y(t)$ and $z(t)$ be the solutions of, the IVPs,
\begin{align}
{}^H\mathcal{D}_{a+}^{\eta,\,\nu; \,\Psi}y(t)=f(t,y(t) ), \quad \mathcal{I}_{a+}^{1-\zeta\,; \,\Psi}y(a)=y_a \label{e13}
\end{align} 
 and
\begin{align}
{}^H\mathcal{D}_{a+}^{\eta,\,\nu; \,\Psi}z(t)=f(t,z(t) ), \quad \mathcal{I}_{a+}^{1-\zeta\,; \,\Psi}z(a)=z_a \label{e14}
\end{align} 
respectively. Then, 
\begin{align}\label{cd}
\left\Vert y-z\right\Vert_{C_{1-\zeta\,;\,\Psi}[a,a+\chi]}\leq \left\{1+\Gamma(\zeta)\,\mathcal{E}_{\eta,\,\zeta}\left(L\left(\Psi(a+\chi)-\Psi(a)\right)^\eta \right)\right\} \left\Vert \tilde{y}_{a}-\tilde{z}_{a}\right\Vert_{C_{1-\zeta\,;\,\Psi}[a,a+\chi]},
\end{align}
where,
$\tilde{y}_{a}(t)=\dfrac{\left( \Psi (t)-\Psi (a) \right) ^{\zeta -1}}{\Gamma ( \zeta ) }y_{a}$ and $\tilde{z}_{a}(t)=\dfrac{\left( \Psi (t)-\Psi (a) \right) ^{\zeta -1}}{\Gamma ( \zeta ) }z_{a}$.
\end{theorem}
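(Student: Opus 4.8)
The plan is to work from the equivalent fractional Volterra integral equations. By the equivalence established in \cite{Vanterler2} and recalled in \eqref{eq3}, the solutions $y$ and $z$ of \eqref{e13} and \eqref{e14} satisfy
\begin{align*}
y(t) &= \tilde{y}_a(t) + \frac{1}{\Gamma(\eta)}\int_a^t \mathrm{Q}^{\eta\,;\,\Psi}(t,\sigma)\,f(\sigma,y(\sigma))\,d\sigma, \\
z(t) &= \tilde{z}_a(t) + \frac{1}{\Gamma(\eta)}\int_a^t \mathrm{Q}^{\eta\,;\,\Psi}(t,\sigma)\,f(\sigma,z(\sigma))\,d\sigma, \quad t\in[a,a+\chi],
\end{align*}
where $\tilde{y}_a(t)=\mathcal{H}^{\Psi}_{\zeta}(t,a)\,y_a$ and $\tilde{z}_a(t)=\mathcal{H}^{\Psi}_{\zeta}(t,a)\,z_a$ as in the statement. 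Subtracting, taking absolute values and invoking the Lipschitz condition on $f$ gives
$$|y(t)-z(t)| \leq |\tilde{y}_a(t)-\tilde{z}_a(t)| + \frac{L}{\Gamma(\eta)}\int_a^t \mathrm{Q}^{\eta\,;\,\Psi}(t,\sigma)\,|y(\sigma)-z(\sigma)|\,d\sigma,\qquad t\in[a,a+\chi].$$

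Next I would invoke the generalized Gronwall inequality, Theorem \ref{lema4}, with $\tilde{u}(t)=|y(t)-z(t)|$, $\tilde{v}(t)=|\tilde{y}_a(t)-\tilde{z}_a(t)|=\dfrac{(\Psi(t)-\Psi(a))^{\zeta-1}}{\Gamma(\zeta)}|y_a-z_a|$ and the constant $\tilde{g}\equiv L/\Gamma(\eta)$. The hypotheses hold: $\tilde{u},\tilde{v}\in L_1[a,a+\chi]$ (both behave like $O\big((\Psi(t)-\Psi(a))^{\zeta-1}\big)$ near $a$, hence are integrable since $\zeta>0$), both are nonnegative, and $\tilde{g}$ is continuous, nonnegative and trivially nondecreasing. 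This yields
$$|y(t)-z(t)| \leq |\tilde{y}_a(t)-\tilde{z}_a(t)| + \int_a^t \sum_{m=1}^{\infty}\frac{L^m}{\Gamma(m\eta)}\,\mathrm{Q}^{m\eta\,;\,\Psi}(t,\sigma)\,|\tilde{y}_a(\sigma)-\tilde{z}_a(\sigma)|\,d\sigma.$$
Each summand is evaluated explicitly via Lemma \ref{lema2}: $\mathcal{I}_{a+}^{m\eta\,;\,\Psi}\big[(\Psi(\cdot)-\Psi(a))^{\zeta-1}\big](t)=\frac{\Gamma(\zeta)}{\Gamma(m\eta+\zeta)}(\Psi(t)-\Psi(a))^{m\eta+\zeta-1}$, so the double integral collapses and one obtains $|y(t)-z(t)|\leq |\tilde{y}_a(t)-\tilde{z}_a(t)| + |y_a-z_a|\sum_{m=1}^\infty \frac{L^m(\Psi(t)-\Psi(a))^{m\eta+\zeta-1}}{\Gamma(m\eta+\zeta)}$.

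To finish, I would multiply through by $(\Psi(t)-\Psi(a))^{1-\zeta}$, use $(\Psi(t)-\Psi(a))^{m\eta}\leq(\Psi(a+\chi)-\Psi(a))^{m\eta}$ on $[a,a+\chi]$, and note that $(\Psi(t)-\Psi(a))^{1-\zeta}|\tilde{y}_a(t)-\tilde{z}_a(t)|=|y_a-z_a|/\Gamma(\zeta)=\|\tilde{y}_a-\tilde{z}_a\|_{C_{1-\zeta\,;\,\Psi}[a,a+\chi]}$, so that $|y_a-z_a|=\Gamma(\zeta)\|\tilde{y}_a-\tilde{z}_a\|_{C_{1-\zeta\,;\,\Psi}[a,a+\chi]}$. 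Bounding the series from above by the full two-parameter Mittag--Leffler function, $\sum_{m\geq1}\frac{w^m}{\Gamma(m\eta+\zeta)}\leq\sum_{m\geq0}\frac{w^m}{\Gamma(m\eta+\zeta)}=\mathcal{E}_{\eta,\,\zeta}(w)$ with $w=L(\Psi(a+\chi)-\Psi(a))^\eta$, and taking the maximum over $t\in[a,a+\chi]$, produces precisely \eqref{cd}. The only delicate point — and the reason one must use the series form of Theorem \ref{lema4} together with the termwise integration above rather than its cleaner conclusion $\tilde{u}\leq\tilde{v}\,\mathcal{E}_\eta(\cdot)$ — is that $\tilde{v}(t)=|\tilde{y}_a(t)-\tilde{z}_a(t)|$ is \emph{not} nondecreasing when $\zeta<1$, since $(\Psi(t)-\Psi(a))^{\zeta-1}$ is decreasing there; everything else is routine manipulation of $\Psi$-Riemann--Liouville integrals of powers, and the additive $1$ in \eqref{cd} is simply the (slightly wasteful) passage from $\sum_{m\geq1}$ to the full Mittag--Leffler sum combined with the $\tilde{v}$-term.
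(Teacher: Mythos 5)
Your proof is correct, but it takes a genuinely different route from the paper's. The paper works with the two Picard iteration sequences $\mathcal{A}^{m}\tilde{y}_{a}$ and $\mathcal{A}^{m}\tilde{z}_{a}$, bounds their difference in the weighted norm by recycling the contraction-power estimate \eqref{e6} from the uniqueness proof, and then lets $m\to\infty$, invoking Weissinger's theorem (Theorem \ref{Weissinger}) to identify the limits with $y$ and $z$. You instead work directly with the solutions through the equivalent Volterra equations, apply the generalized Gronwall inequality (Theorem \ref{lema4}) in its series form, and evaluate each iterated kernel exactly via Lemma \ref{lema2}; both arguments land on \eqref{cd} with the same constant. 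Your route is self-contained at the level of the solutions themselves --- it needs neither the convergence of the iterates nor \eqref{e6} --- and it sidesteps a delicate point in the paper's argument, namely that \eqref{e6} was established for powers of a \emph{single} operator $\mathcal{A}$ (fixed inhomogeneous term $\mathcal{H}^{\Psi}_{\zeta}(\cdot,a)\,y_{a}$), whereas the continuous-dependence proof applies it to iterates built from the two different inhomogeneous terms $\tilde{y}_{a}$ and $\tilde{z}_{a}$, which strictly speaking calls for a fresh induction. Your observation that the monotone form of the Gronwall conclusion is unavailable because $(\Psi(t)-\Psi(a))^{\zeta-1}$ is decreasing for $\zeta<1$, forcing the termwise treatment of the series, is exactly the right care to take; the interchange of sum and integral is justified by nonnegativity of all terms. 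What the paper's approach buys in exchange is that the estimate is tied directly to the constructive scheme of Section 4, at the cost of the extra limiting step in $m$.
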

\begin{proof}
Consider the sequences $\{\mathcal{A}^{m}\tilde{y}_{a}\}$ and $\{\mathcal{A}^{m}\tilde{z}_{a}\}$ defined by 
\begin{align*}
& \mathcal{A}^{0}\tilde{y}_{a}(t)=\tilde{y}_{a}(t),\\ 
& \mathcal{A}^{m}\tilde{y}_{a}(t)=\tilde{y}_{a}(t)+\frac{1}{\Gamma ( \eta ) }\int_{a}^{t}\mathrm{Q}^{\eta\,;\,\psi}(t,\sigma)f\left(\sigma,\mathcal{A}^{m-1}\tilde{y}_{a}(\sigma)\right) \,d\sigma, ~(m=1,2,\cdots)
\end{align*}
and
\begin{align*}
& \mathcal{A}^{0}\tilde{z}_{a}(t)=\tilde{z}_{a}(t),\\ 
& \mathcal{A}^{m}\tilde{z}_{a}(t)=\tilde{z}_{a}(t)+\frac{1}{\Gamma ( \eta ) }\int_{a}^{t}\mathrm{Q}^{\eta\,;\,\psi}(t,\sigma)f\left(\sigma,\mathcal{A}^{m-1}\tilde{z}_{a}(\sigma)\right) \,d\sigma, ~(m=1,2,\cdots)\\respectively.
\end{align*}
Then, for each $x$ with  $a<x\leq \chi$,
\begin{align*}
&\left\Vert \mathcal{A}^{m}\tilde{y}_{a}-\mathcal{A}^{m}\tilde{z}_{a}\right\Vert_{C_{1-\zeta\,; \,\Psi}[a,a+x]}\\
&=\underset{t\in\left[a,a+x\right]}{\max}\left|(\Psi(t)-\Psi(a))^{1-\zeta}
\Big\{\tilde{y}_{a}(t)-\tilde{z}_{a}(t)\right.\\
&\left.\left.\qquad +\frac{1}{\Gamma(\eta)}\int_{a}^{t}\mathrm{Q}^{\eta\,;\,\psi}(t,\sigma)\left[f\left(\sigma,\mathcal{A}^{m-1}\tilde{y}_{a}(\sigma)\right)-f\left(\sigma,\mathcal{A}^{m-1}\tilde{z}_{a}(\sigma)\right)\right] \,d\sigma \right\}\right|\\
&\leq \left\Vert \tilde{y}_{a}-\tilde{z}_{a}\right\Vert_{C_{1-\zeta\,; \,\Psi}[a,a+x]}+(\Psi(a+x)-\Psi(a))^{1-\zeta}\frac{L}{\Gamma(\eta)}\int_{a}^{a+x}\mathrm{Q}^{\eta\,;\,\psi}(a+x,\sigma)\times\\
&\qquad( \Psi (\sigma) -\Psi (a))^{\zeta-1}\underset{\varsigma\in\left[a,\sigma\right]}{\max}\left|(\Psi (\varsigma) -\Psi (a))^{1-\zeta}\left\{\mathcal{A}^{m-1}\tilde{y}_{a}(\varsigma)-\mathcal{A}^{m-1}\tilde{z}_{a}(\varsigma)  \right\}  \right|\,d\sigma\\
&\leq \left\Vert \tilde{y}_{a}-\tilde{z}_{a}\right\Vert_{C_{1-\zeta\,; \,\Psi}[a,a+x]}+L(\Psi(a+x)-\Psi(a))^{1-\zeta}\frac{1}{\Gamma(\eta)}\int_{a}^{a+x}\mathrm{Q}^{\eta\,;\,\psi}(a+x,\sigma)\times\\
&\qquad( \Psi (\sigma) -\Psi (a))^{\zeta-1}\left\Vert \mathcal{A}^{m-1}\tilde{y}_{a}-\mathcal{A}^{m-1}\tilde{z}_{a}\right\Vert_{C_{1-\zeta\,; \,\Psi}[a,\sigma]}\,d\sigma. 
\end{align*}

Using the inequality \eqref{e6},  
\begin{align*}
&\left\Vert \mathcal{A}^{m}\tilde{y}_{a}-\mathcal{A}^{m}\tilde{z}_{a}\right\Vert_{C_{1-\zeta\,; \,\Psi}[a,a+x]}\\
&\leq \left\Vert \tilde{y}_{a}-\tilde{z}_{a}\right\Vert_{C_{1-\zeta\,; \,\Psi}[a,a+x]}+(\Psi(a+x)-\Psi(a))^{1-\zeta}\frac{L}{\Gamma(\eta)}\int_{a}^{a+x}\mathrm{Q}^{\eta\,;\,\psi}(a+x,\sigma)\times\\
&\qquad( \Psi (\sigma) -\Psi (a))^{\zeta-1}\frac{\Gamma(\zeta)}{\Gamma((m-1) \eta+\zeta)}\left[L\left(\Psi (\sigma)-\Psi (a) \right)^{\eta } \right]^{m-1}\left\Vert \tilde{y}_a -\tilde{z}_a \right\Vert _{_{C_{1-\zeta\,; \,\Psi }\left[ a,\sigma\right] }}\,d\sigma\\
&= \left\Vert \tilde{y}_{a}-\tilde{z}_{a}\right\Vert_{C_{1-\zeta\,; \,\Psi}[a,a+x]}
+L^{m}\frac{\Gamma(\zeta)}{\Gamma((m-1) \eta+\zeta)}\left\Vert \tilde{y}_{a}-\tilde{z}_{a}\right\Vert_{C_{1-\zeta\,; \,\Psi}[a,a+x]}\times\\
&\qquad(\Psi(a+x)-\Psi(a))^{1-\zeta}\,\,\mathcal{I}_{a+}^{\eta\,; \,\Psi}( \Psi (a+x) -\Psi (a)) ^{(m-1)\eta+\zeta-1}\\
&= \left\Vert \tilde{y}_{a}-\tilde{z}_{a}\right\Vert_{C_{1-\zeta\,; \,\Psi}[a,a+x]}\left\{1+L^{m}\frac{\Gamma(\zeta)}{\Gamma((m-1) \eta+\zeta)}(\Psi(a+x)-\Psi(a))^{1-\zeta}\times
\right.\\
&\left.\qquad\frac{\Gamma((m-1) \eta+\zeta)}{\Gamma(m \eta+\zeta)} (\Psi(a+x)-\Psi(a))^{m\eta+\zeta-1}\right\}\\
&=\left\{1+{\Gamma(\zeta)}\frac{\left[L(\Psi(a+x)-\Psi(a))^{\eta}\right]^m}{\Gamma(m \eta+\zeta)}\right\}\left\Vert \tilde{y}_{a}-\tilde{z}_{a}\right\Vert_{C_{1-\zeta\,; \,\Psi}[a,a+x]}\\
& \leq \left\{1+{\Gamma(\zeta)}\sum_{k=0}^{m}\frac{\left[L(\Psi(a+x)-\Psi(a))^{\eta}\right]^m}{\Gamma(m \eta+\zeta)}\right\}\left\Vert \tilde{y}_{a}-\tilde{z}_{a}\right\Vert_{C_{1-\zeta\,; \,\Psi}[a,a+x]}.
\end{align*}

Taking limit as $ m\to \infty $ and $ x\to \chi $ and utilizing Theorem \ref{Weissinger}, we get the desired inequality \eqref{cd}.
\end{proof}

\begin{rem}
In the above theorem, in particular if $y_a=z_a$, then we get uniqueness of solution.
\end{rem}

\section{Picard's Successive Approximations: Nonlinear Case}
In this section, we define the Picard's type successive approximations and prove that it converges to a unique solution of nonlinear Cauchy problem \eqref{eq1}-\eqref{eq2}. Further, we obtain an estimation for the error bound.

\begin{theorem} \label{th4.2}
Let $\zeta=\eta+\nu(1-\eta), 0<\eta<1$ and $0\leq\nu\leq 1$. Define
$$R_{0}=\left\{(t,y):a\leq t\leq a+\xi,\left|y-\mathcal{H}^{\Psi}_{\zeta}(t,a)\,y_{a}\right|\leq k \right\},~\xi>0,  ~k>0.$$
where, $\mathcal{H}^{\Psi}_{\zeta}(t,a)=\frac{\left( \Psi (t)-\Psi (a) \right) ^{\zeta-1}}{\Gamma(\zeta)}$.
Let $f(\cdot\,,y(\cdot))\in C_{1-\zeta\,; \,\Psi} [a,a+\chi]$ for every \,$y\in C_{1-\zeta\,; \,\Psi} [a,a+\chi]$. 
Let $\Psi\in C^1\left([a,a+\chi],~\R \right)$ be an increasing bijective function with $\Psi'(t)\neq 0$, for all $\, t \in [a,a+\chi]$,  where
$$
\chi=\min\left\{\xi,~\Psi^{-1}\left[\Psi(a)+\left(\frac{k\, \Gamma(\eta+\zeta)}{\Gamma(\zeta)M}\right)^\frac{1}{\eta}\right]-a\right\} 
$$ 
and $M$ is constant such that
$$
|\left( \Psi (t)-\Psi (a) \right) ^{1-\zeta}f(t)|\leq M,~ \forall \,t \in [a,a+\chi].	
$$
Then the successive approximations defined by 
\begin{align}\label{e7}
 y_0(t)&=\mathcal{H}^{\Psi}_{\zeta}(t,a)\,y_{a}\notag \\
 y_n(t)&=\mathcal{H}^{\Psi}_{\zeta}(t,a)\,y_{a} +\mathrm{I}_{a+}^{\eta\,; \,\Psi}f\left(t,y_{n-1}(t)\right), n=1,2,\cdots.
 \end{align}
 satisfies the following conditions:
 \begin{enumerate}
\item [{\rm(a)}]$y_{n}\in C_{1-\zeta;\, \Psi}[a,a+\chi], ~n=0,1,2,\cdots$ ;
\item [{\rm(b)}] $(\Psi(t)-\Psi(a))^{1-\zeta}|y_{n}(t)-y_0(t)|\leq \dfrac{M\, \Gamma(\zeta)}{\Gamma(\eta+\zeta)}(\Psi(t)-\Psi(a))^{\eta}, ~t \in [a, a+\chi], ~n=0,1,2,\cdots.$
 \end{enumerate}
\end{theorem}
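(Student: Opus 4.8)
The plan is to prove (a) and (b) simultaneously by induction on $n$, using the same type of weighted Volterra estimates already carried out in Theorems \ref{th3.1} and \ref{th4.1}. For the base case $n=0$ one has
$(\Psi(t)-\Psi(a))^{1-\zeta}y_0(t)=(\Psi(t)-\Psi(a))^{1-\zeta}\mathcal{H}^{\Psi}_{\zeta}(t,a)\,y_a=y_a/\Gamma(\zeta)$, a constant function on $[a,a+\chi]$, so $y_0\in C_{1-\zeta;\,\Psi}[a,a+\chi]$ and (a) holds; (b) is trivial since $y_0-y_0\equiv 0$. This computation also records the fact, used at every step below, that $\mathcal{H}^{\Psi}_{\zeta}(\cdot,a)\,y_a\in C_{1-\zeta;\,\Psi}[a,a+\chi]$.

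For the inductive step, assume $y_{n-1}\in C_{1-\zeta;\,\Psi}[a,a+\chi]$ and that (b) holds for $n-1$. To get (a) for $n$: by the hypothesis on $f$ we have $f(\cdot,y_{n-1}(\cdot))\in C_{1-\zeta;\,\Psi}[a,a+\chi]$, and since $0\leq 1-\zeta<1$ (as $\zeta=\eta+\nu(1-\eta)\in(0,1]$), Lemma \ref{lema1} with $\mu=1-\zeta$ shows that $\mathcal{I}_{a+}^{\eta;\,\Psi}$ maps $C_{1-\zeta;\,\Psi}[a,a+\chi]$ into itself. Hence $y_n=\mathcal{H}^{\Psi}_{\zeta}(\cdot,a)\,y_a+\mathcal{I}_{a+}^{\eta;\,\Psi}f(\cdot,y_{n-1}(\cdot))$ is a sum of two elements of $C_{1-\zeta;\,\Psi}[a,a+\chi]$, which gives (a) for $n$.

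To obtain (b) for $n$, I would start from $y_n(t)-y_0(t)=\mathcal{I}_{a+}^{\eta;\,\Psi}f(t,y_{n-1}(t))$, multiply by $(\Psi(t)-\Psi(a))^{1-\zeta}$, insert the factor $(\Psi(\sigma)-\Psi(a))^{\zeta-1}(\Psi(\sigma)-\Psi(a))^{1-\zeta}$ under the integral sign, and estimate $|(\Psi(\sigma)-\Psi(a))^{1-\zeta}f(\sigma,y_{n-1}(\sigma))|\leq M$. What remains is $M\,(\Psi(t)-\Psi(a))^{1-\zeta}\,\mathcal{I}_{a+}^{\eta;\,\Psi}(\Psi(t)-\Psi(a))^{\zeta-1}$, and Lemma \ref{lema2} with $\delta=\zeta$ evaluates this to exactly $\dfrac{M\,\Gamma(\zeta)}{\Gamma(\eta+\zeta)}(\Psi(t)-\Psi(a))^{\eta}$, which is precisely the bound in (b). This closes the induction.

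The estimates are routine; the one point deserving attention is the consistency of the standing bound $|(\Psi(t)-\Psi(a))^{1-\zeta}f(t,y_{n-1}(t))|\leq M$ with the region $R_0$ on which $f$ lives. This is exactly what (b) supplies: since the choice of $\chi$ forces $(\Psi(a+\chi)-\Psi(a))^{\eta}\leq k\,\Gamma(\eta+\zeta)/(\Gamma(\zeta)M)$, the inequality in (b) yields $(\Psi(t)-\Psi(a))^{1-\zeta}|y_{n-1}(t)-y_0(t)|\leq k$ on $[a,a+\chi]$, so every iterate stays in the relevant region and $M$ remains a legitimate bound for the weighted value of $f$ along $y_{n-1}$. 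Consequently (a) and (b) must be threaded through the induction together rather than proved in sequence.
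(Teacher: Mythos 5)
Your proposal is correct and follows essentially the same route as the paper's proof: part (a) is obtained from Lemma \ref{lema1} applied to $f(\cdot\,,y_{n-1}(\cdot))$ together with the observation that $(\Psi(t)-\Psi(a))^{1-\zeta}\mathcal{H}^{\Psi}_{\zeta}(t,a)\,y_a$ is constant, and part (b) by inserting the factor $(\Psi(\sigma)-\Psi(a))^{\zeta-1}(\Psi(\sigma)-\Psi(a))^{1-\zeta}$ under the integral, bounding the weighted value of $f$ by $M$, and evaluating $\mathcal{I}_{a+}^{\eta\,;\,\Psi}(\Psi(t)-\Psi(a))^{\zeta-1}$ via Lemma \ref{lema2}. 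Your closing remark that (b) together with the choice of $\chi$ keeps every iterate inside $R_0$, so that $M$ remains a legitimate bound along each $y_{n-1}$, makes explicit a point the paper leaves tacit and is a worthwhile refinement, but it does not change the substance of the argument.
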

\begin{proof} 
(a) Since $y_0(t)=\mathcal{H}^{\Psi}_{\zeta}(t,a)\,y_{a}$, we have  $( \Psi (t)-\Psi (a) ) ^{1-\zeta}y_0(t)=\dfrac{y_a}{\Gamma(\zeta)} \in C[a,a+\chi]$ and hence $y_0\in C_{1-\zeta\,; \,\Psi}[a,a+\chi]$. 
Further, by Lemma \ref{lema1},  $\Psi$--Riemann fractional integral operator $\mathcal{I}_{a+}^{\eta \,; \,\Psi }\left( \cdot \right) $ maps $C_{1-\zeta \,; \,\Psi }\left[ a,a+\chi\right] $ to $C_{1-\zeta \,; \,\Psi }\left[
a,a+\chi\right]$ and hence $\mathcal{I}_{a +}^{\eta;\, \Psi} f(\cdot,y_{n-1}(\cdot)) \in C_{{1-\zeta};\, \Psi}[a,a+\chi]$ for any 
$y_{n-1}\in C_{1-\zeta \,; \,\Psi }\left[ a,a+\chi\right]$ for $n=1,2,\cdots$.
This proves, $y_n\in C_{1-\zeta\,; \,\Psi}[a,a+\chi]~ \forall \, n\in \N$.\\

(b) For any $t \in [a, a+\chi] $ and $n \in \N $, we have
\begin{align*}
|y_{n}(t)-y_{0}(t)|&=\left|\frac{1}{\Gamma(\eta)}\int_{a}^{t} \mathrm{Q}^{\eta}_{\Psi}(t,\sigma)f(\sigma, y_{n-1}(\sigma))\,d\sigma\right|\\
&\leq \frac{1}{\Gamma(\eta)}\int_{a}^{t} \mathrm{Q}^{\eta}_{\Psi}(t,\sigma)(\Psi(\sigma)-\Psi(a))^{\zeta-1}\times\\
&\qquad\left|(\Psi(\sigma)-\Psi(a))^{1-\zeta}f(\sigma, y_{n-1}(\sigma))\right|\,d\sigma\\
&\leq M\,\,\mathcal{I}_{a+}^{\eta\,; \,\Psi}(\Psi(t)-\Psi(a))^{\zeta-1}\\
&= M \frac{\Gamma(\zeta)}{\Gamma(\eta+\zeta)}(\Psi(t)-\Psi(a))^{\eta+\zeta-1}.
\end{align*}
Therefore,
\begin{equation*}
(\Psi(t)-\Psi(a))^{1-\zeta}|y_{n}(t)-y_0(t)|\leq \frac{M\, \Gamma(\zeta)}{\Gamma(\eta+\zeta)}(\Psi(t)-\Psi(a))^{\eta}, ~t \in [a, a+\chi], ~n=0,1,2,\cdots.
\end{equation*}
\end{proof}
\begin{theorem}\label{theorem4.2}
Let $\zeta=\eta+\nu(1-\eta), 0<\eta<1$ and $0\leq\nu\leq 1$. Define
$$R_{0}=\left\{(t,y):a\leq t\leq a+\xi,\left|y-\mathcal{H}^{\Psi}_{\zeta}(t,a)\,y_{a}\right|\leq k \right\},~\xi>0,  ~k>0.$$
where, $\mathcal{H}^{\Psi}_{\zeta}(t,a)=\frac{\left( \Psi (t)-\Psi (a) \right) ^{\zeta-1}}{\Gamma(\zeta)}$.
Let $f(\cdot\,,y(\cdot))\in C_{1-\zeta\,; \,\Psi} [a,a+\chi]$ for every $y\in C_{1-\zeta\,; \,\Psi} [a,a+\chi]$ and  satisfies the Lipschitz condition 
$$
\left | f(t,u)-f(t,v)\right|\leq L|u-v|,\,\,u,v\in \R, ~L>0.
$$ 
Let $\Psi\in C^1\left([a,a+\chi],~\R \right)$ be an increasing bijective function with $\Psi'(t)\neq 0$,for all $\, t \in [a,a+\chi]$,  where
$$
\chi=\min\left\{\xi,~\Psi^{-1}\left[\Psi(a)+\left(\frac{k\, \Gamma(\eta+\zeta)}{\Gamma(\zeta)M}\right)^\frac{1}{\eta}\right]-a\right\} 
$$ 
and $M$ is constant such that
$$
|\left( \Psi (t)-\Psi (a) \right) ^{1-\zeta}f(t)|\leq M,~ \forall \,t \in [a,a+\chi].	
$$
Then the successive approximations defined by \eqref{e7}
converges to the unique solution $y$ of the Cauchy type problem	
\eqref{eq1}-\eqref{eq2} in the weighted space $C_{1-\zeta\,; \,\Psi} [a,a+\chi]$.
\end{theorem}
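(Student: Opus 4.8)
The plan is to recognize the Picard iterates \eqref{e7} as the orbit under the operator $\mathcal{A}$ used in the proof of Theorem \ref{th3.1}, namely $\mathcal{A}y(t)=\mathcal{H}^{\Psi}_{\zeta}(t,a)\,y_a+\mathcal{I}_{a+}^{\eta\,;\,\Psi}f(t,y(t))$, so that $y_n=\mathcal{A}^{n}y_0$ with $y_0=\mathcal{H}^{\Psi}_{\zeta}(\cdot,a)\,y_a$. First I would check that $y_0$ belongs to
$$\mathcal{U}=\left\{y\in C_{1-\zeta\,;\,\Psi}[a,a+\chi]:\ \|y-\mathcal{H}^{\Psi}_{\zeta}(\cdot,a)\,y_a\|_{C_{1-\zeta\,;\,\Psi}[a,a+\chi]}\leq k\right\},$$
and that, for the present $\chi$ (in which the constant $M$ plays the role that $\|f\|_{C_{1-\zeta\,;\,\Psi}[a,a+\xi]}$ had in Theorem \ref{th3.1}), the estimate \eqref{e4}--\eqref{e5} rerun with $M$ in place of $\|f\|$ still gives $\mathcal{A}\,\mathcal{U}\subseteq\mathcal{U}$. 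Together with Theorem \ref{th4.2}(a) this yields $y_n\in\mathcal{U}$, hence $(t,y_n(t))\in R_0$, for every $n$.

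The core of the proof is to show that $\{y_n\}$ is a Cauchy sequence in the Banach space $C_{1-\zeta\,;\,\Psi}[a,a+\chi]$. Since $y_{n+1}-y_n=\mathcal{A}^{n}y_1-\mathcal{A}^{n}y_0$, the iterate inequality \eqref{e6} (valid at $x=\chi$ and depending only on the Lipschitz constant $L$) gives
$$\|y_{n+1}-y_n\|_{C_{1-\zeta\,;\,\Psi}[a,a+\chi]}\leq\frac{\Gamma(\zeta)}{\Gamma(n\eta+\zeta)}\bigl[L(\Psi(a+\chi)-\Psi(a))^{\eta}\bigr]^{n}\,\|y_1-y_0\|_{C_{1-\zeta\,;\,\Psi}[a,a+\chi]},$$
while Theorem \ref{th4.2}(b) with $n=1$ bounds $\|y_1-y_0\|_{C_{1-\zeta\,;\,\Psi}[a,a+\chi]}$ by $\frac{M\,\Gamma(\zeta)}{\Gamma(\eta+\zeta)}(\Psi(a+\chi)-\Psi(a))^{\eta}$. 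Summing over $n$ and invoking the convergence of
$$\sum_{n=0}^{\infty}\frac{\bigl[L(\Psi(a+\chi)-\Psi(a))^{\eta}\bigr]^{n}}{\Gamma(n\eta+\zeta)}=\mathcal{E}_{\eta,\,\zeta}\!\left(L(\Psi(a+\chi)-\Psi(a))^{\eta}\right)$$
shows that $\sum_{n}\|y_{n+1}-y_n\|$ converges; hence $\{y_n\}$ is Cauchy and, by completeness, converges to some $y\in C_{1-\zeta\,;\,\Psi}[a,a+\chi]$. Equivalently, one may simply apply Weissinger's theorem \ref{Weissinger} to $\mathcal{A}$ with $\eta_n=\bigl[L(\Psi(a+\chi)-\Psi(a))^{\eta}\bigr]^{n}/\Gamma(n\eta+\zeta)$, whose summability was already recorded in the proof of Theorem \ref{th4.1}, and conclude that $(\mathcal{A}^{n}y_0)$ converges to the unique fixed point of $\mathcal{A}$.

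Next I would pass to the limit in \eqref{e7}. Since $\mathcal{A}$ is continuous on $\mathcal{U}$ (established in Theorem \ref{th3.1}), letting $n\to\infty$ in $y_n=\mathcal{A}y_{n-1}$ gives $y=\mathcal{A}y$, i.e. $y$ solves the Volterra integral equation \eqref{eq3}, which is equivalent to the Cauchy problem \eqref{eq1}--\eqref{eq2} in $C_{1-\zeta\,;\,\Psi}[a,a+\chi]$. Uniqueness is already supplied by Theorem \ref{th4.1}: $\mathcal{A}$ has exactly one fixed point, so the limit $y$ is the unique solution. The main obstacle is really just bookkeeping: verifying that the interval length $\chi$ built from $M$ keeps $\mathcal{A}$ a self-map of $\mathcal{U}$, and justifying the interchange of the limit with the $\Psi$-Riemann--Liouville integral — the latter being handled by the continuity of $\mathcal{A}$, or bypassed entirely by the direct appeal to Theorem \ref{Weissinger}; the Gamma-function estimates needed along the way are exactly those already carried out for \eqref{e6} and Theorem \ref{th4.2}(b).
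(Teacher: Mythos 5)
Your proposal is correct and follows the same basic mechanism as the paper --- Picard iterates viewed as a telescoping series whose terms are dominated by the summands of $\mathcal{E}_{\eta,\zeta}\left(L(\Psi(a+\chi)-\Psi(a))^{\eta}\right)$, followed by passage to the limit in \eqref{e7} via the Lipschitz condition --- but it differs in two worthwhile details. First, where the paper re-proves the difference estimate \eqref{e10} for $\|y_{n+1}-y_n\|$ by a fresh induction, you obtain it by writing $y_{n+1}-y_n=\mathcal{A}^{n}y_1-\mathcal{A}^{n}y_0$ and recycling the operator inequality \eqref{e6} together with Theorem \ref{th4.2}(b); this gives the slightly cruder bound $\frac{\Gamma(\zeta)}{\Gamma(n\eta+\zeta)}\left[L(\Psi(a+\chi)-\Psi(a))^{\eta}\right]^{n}\cdot\frac{M\Gamma(\zeta)}{\Gamma(\eta+\zeta)}(\Psi(a+\chi)-\Psi(a))^{\eta}$ rather than the paper's $\frac{M\Gamma(\zeta)}{L}\frac{[L(\Psi(a+\chi)-\Psi(a))^{\eta}]^{n+1}}{\Gamma((n+1)\eta+\zeta)}$, but both are summable, so the Cauchy argument goes through either way (note the paper's sharper constant is what feeds the error bound \eqref{eqn} in the next theorem, so your version would weaken that corollary). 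Second, for uniqueness the paper runs a separate Gronwall argument (Theorem \ref{lema4}) on two arbitrary solutions, whereas you appeal to Weissinger's theorem \ref{Weissinger} / Theorem \ref{th4.1}, which simultaneously delivers convergence of $(\mathcal{A}^{n}y_0)$ and uniqueness of the fixed point; this is cleaner, though it quietly requires the self-map verification $\mathcal{A}\,\mathcal{U}\subseteq\mathcal{U}$ with $M$ replacing $\|f\|_{C_{1-\zeta;\,\Psi}[a,a+\xi]}$ in the definition of $\chi$, which you correctly flag and which is exactly what Theorem \ref{th4.2}(b) supplies. No gaps.
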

\begin{proof} We give the proof of the theorem in the following steps.
\noindent
\\

{{Step 1}:} In the above theorem we have already proved that $y_n\in C_{1-\zeta\,; \,\Psi}[a,a+\chi]$,~$\forall \, n\in \N$. 
\noindent \\

{{Step 2}:} We show that the sequence $\{y_n\}$ converges  to a function $y\in C_{1-\zeta\,; \,\Psi}[a,a+\chi]$ with respect to the norm $\|\cdot\|_{C_{1-\zeta\,; \,\Psi}[a,a+\chi]}$.
Observe that $y_n$ can be written as
 \begin{equation}\label{e8}
y_n=y_0+\sum_{k=1}^{n}(y_k - y_{k-1})
\end{equation} 
 which is partial sum of series 
 \begin{equation}\label{e9}
y_0+\sum_{k=1}^{\infty}(y_k - y_{k-1}).
 \end{equation} 

Therefore, to show that sequence $\{y_n\}$ is convergent, we prove that the series \eqref{e9} is convergent. For any $x \in [a,a+\chi]$, consider the space $C_{1-\zeta\,; \,\Psi}[a,x]$ having norm
\begin{equation}\label{space1}
\left\Vert h\right\Vert _{C_{1-\zeta\,; \,\Psi }\left[ a,x\right] }=\underset{t\in \left[ a,x\right] }{\max }\left\vert \left( \Psi \left( t\right) -\Psi \left( a\right) \right)
 ^{1-\zeta }h\left( t\right) \right\vert.
 \end{equation}
 
By mathematical induction, we now prove , for each $x\in [a,a+\chi]$ and $y_j\in C_{1-\zeta\,; \,\Psi}[a,x]$, 
\begin{equation}\label{e10}
\left\Vert y_{n+1}-y_{n}  \right\Vert_{_{C_{1-\zeta\,; \,\Psi }\left[ a,x\right] }}\leq \frac{ M \Gamma(\zeta)} {L}\frac{\left[L(\Psi(x)-\Psi(a))^\eta\right]^{n+1}}{\Gamma((n+1)\eta+\zeta)}, ~n\in \N.
 \end{equation}
 
Using Lemma \ref{lema3}, we obtain
 \begin{align*}
 &\left\Vert y_{1}-y_{0}\right\Vert_{_{C_{1-\zeta\,; \,\Psi }\left[ a,x\right] }}\\
 &=\underset{t \in [a,x]}{\max}\left|(\Psi (t)-\Psi (a) )^{1-\zeta }\left\{ y_{0}(t)+\frac{1}{\Gamma ( \eta ) }\int_{a}^{t}\mathrm{Q}^{\eta}_{\Psi}(t,\sigma)f(\sigma,y_{0}(\sigma)) \,d\sigma-y_{0}(t)\right\}\right|\\
 &=\underset{t \in [a,x]}{\max}\left|(\Psi (t)-\Psi (a) )^{1-\zeta }\frac{1}{\Gamma ( \eta ) }\int_{a}^{t}\mathrm{Q}^{\eta}_{\Psi}(t,\sigma)f(\sigma,y_{0}(\sigma)) \,d\sigma\right|\\
 &=\left\Vert \mathcal{I}_{a+}^{\eta\,; \,\Psi}f\left(\cdot,y_{0}(\cdot)\right)\right\Vert_{_{C_{1-\zeta\,; \,\Psi }\left[ a,x\right] }}\\
 &\leq M\,\,\frac{\Gamma(\zeta)}{\Gamma(\eta+\zeta)}\left(\Psi(t)-\Psi(a)\right)^{\eta}\\
 &\leq M\,\,\frac{\Gamma(\zeta)}{\Gamma(\eta+\zeta)}\left(\Psi(x)-\Psi(a)\right)^{\eta},
 \end{align*}  
which is the inequality \eqref{e10} for $n=0$. 

Now, assume that the inequality \eqref{e10} is hold for $n=k$. We prove  it is also hold for $n=k+1$. In fact, 
 \begin{align*}
 &\left\Vert y_{k+2}-y_{k+1}\right\Vert_{C_{1-\zeta\,; \,\Psi }\left[ a,x\right] }\\
&\leq L\underset{t \in [a,x]}{\max}(\Psi (t)-\Psi (a) )^{1-\zeta }
\frac{1}{\Gamma ( \eta ) }\int_{a}^{t}\mathrm{Q}^{\eta}_{\Psi}(t,\sigma)\times \\
&\qquad( \Psi (\sigma) -\Psi (a)) ^{\zeta-1}( \Psi (\sigma) -\Psi (a)) ^{1-\zeta}\left| y_{k+1}(\sigma)-y_{k}(\sigma)\right| \,d\sigma\\
&\leq L(\Psi (x)-\Psi (a) )^{1-\zeta }\frac{1}{\Gamma ( \eta ) }\int_{a}^{x}\mathrm{Q}^{\eta:\Psi}(x,\sigma)( \Psi (\sigma) -\Psi (a)) ^{\zeta-1}\left\Vert y_{k+1}-y_{k}\right\Vert_{{C_{1-\zeta\,; \,\Psi }\left[ a,\sigma\right] }}\,d\sigma\\
&\leq L(\Psi (x)-\Psi (a) )^{1-\zeta }\frac{1}{\Gamma ( \eta ) }\int_{a}^{x}\mathrm{Q}^{\eta:\Psi}(x,\sigma)( \Psi (\sigma) -\Psi (a)) ^{\zeta-1}\times \\&\qquad\frac{ M \Gamma(\zeta)} {L}\frac{\left[L(\Psi(\sigma)-\Psi(a))^\eta\right]^{k+1}}{\Gamma((k+1)\eta+\zeta)}\,d\sigma\\
&=M\,\,L^{k+1}\,\,\frac{\Gamma(\zeta)}{\Gamma((k+1)\eta+\zeta)}(\Psi(x)-\Psi(a))^{1-\zeta}\,\,\mathcal{I}_{a+}^{\eta\,; \,\Psi}( \Psi (x) -\Psi (a)) ^{(k+1)\eta+\zeta-1}\\
&=M\,\,L^{k+1}\,\,\frac{\Gamma(\zeta)}{\Gamma((k+1)\eta+\zeta)}(\Psi(x)-\Psi(a))^{1-\zeta}\times\frac{\Gamma((k+1)\eta+\zeta)}{\Gamma((k+2)\eta+\zeta)}(\Psi(x)-\Psi(a))^{(k+2)\eta+\zeta-1}\\
&=\frac{M\,\,\Gamma(\zeta)}{L}\frac{\left( L\left(\Psi(x)-\Psi(a)\right)^{\eta}\right) ^{k+2}}{\Gamma((k+2)\eta+\zeta)}.
\end{align*}
Thus, inequality \eqref{e10} is true for $n=k+1$. Hence, by using principle of mathematical induction the inequality  \eqref{e10} holds for each $n\in \N$ and  $x \in [a,a+\chi]$. Taking $x \to a+\chi$ in \eqref{e10}, we get, 
$$
\left\Vert y_{k}-y_{k-1}\right\Vert_{C_{1-\zeta\,;\,\Psi}[a,a+\chi]}\leq\frac{ M \Gamma(\zeta)} {L}\frac{\left[L(\Psi(a+\chi)-\Psi(a))^\eta\right]^{k}}{\Gamma(k\eta+\zeta)}.
$$

Therefore,
\begin{align*}
\sum_{k=1}^{\infty}\left\Vert y_{k}-y_{k-1}\right\Vert_{C_{1-\zeta\,;\,\Psi}[a,a+\chi]}
&\leq\frac{ M \Gamma(\zeta)} {L}\sum_{k=1}^{\infty}\frac{\left[L(\Psi(a+\chi)-\Psi(a))^\eta\right]^{k}}{\Gamma(k\eta+\zeta)}\\
&\leq\frac{ M \Gamma(\zeta)} {L}\left[\mathcal{E}_{\eta,\,\zeta}\left(L\left(\Psi(a+\chi)-\Psi(a)\right)^\eta\right)-\frac{1}{\Gamma(\zeta)}\right].
\end{align*}

This proves the series
$y_0+\sum_{k=1}^{\infty}\left(y_k - y_{k-1}\right)$ is convergent in the space $C_{1-\zeta\,; \,\Psi}[a,a+\chi]$.  Let us suppose that $$\tilde{y}=y_0+\sum_{k=1}^{\infty}(y_k - y_{k-1}).$$

Therefore,
 \begin{equation}\label{e11}
\left\Vert y_{n}-\tilde{y}\right\Vert_{C_{1-\zeta\,; \,\Psi}[a,a+\chi]}\rightarrow 0~ \text{as}\,\,n\rightarrow \infty.
 \end{equation}

\noindent 

{{Step 3}:} $\tilde{y}$ is solution of fractional integral equation \eqref{eq3}. In fact, we have
\begin{align*}
&\left\Vert\mathcal{I}_{a+}^{\eta\,; \,\Psi}f\left(\cdot,y_{n}(\cdot)\right)-\mathcal{I}_{a+}^{\eta\,; \,\Psi}f\left(\cdot,\tilde{y}(\cdot)\right)\right\Vert_{C_{1-\zeta\,;\,\Psi}[a,a+\chi]}\\
&\leq L\left\Vert         y_{n}-\tilde{y}\right\Vert_{C_{1-\zeta\,;\,\Psi}[a,a+\chi]}\underset{t \in [a,a+\chi]}{\max}(\Psi (t)-\Psi (a) )^{1-\zeta }\,\,\mathcal{I}_{a+}^{\eta\,; \,\Psi}( \Psi (t) -\Psi (a)) ^{\zeta-1}\\
&\leq L\left\Vert         y_{n}-\tilde{y}\right\Vert_{C_{1-\zeta\,;\,\Psi}[a,a+\chi]}\underset{t \in [a,a+\chi]}{\max}(\Psi (t)-\Psi (a) )^{1-\zeta}\frac{\Gamma(\zeta)}{\Gamma(\eta+\zeta)}(\Psi (t)-\Psi (a) )^{\eta+\zeta-1}\\
&\leq L\,\,\ \frac{\Gamma(\zeta)}{\Gamma(\eta+\zeta)}(\Psi (a+\chi)-\Psi (a) )^{\eta}
\left\Vert y_{n}-\tilde{y}\right\Vert_{C_{1-\zeta\,;\,\Psi}[a,a+\chi]}.
\end{align*}

Using \eqref{e11}, we  obtain
\begin{equation}\label{e12}
\left\Vert\mathcal{I}_{a+}^{\eta\,; \,\Psi}f\left(\cdot,y_{n}(\cdot)\right)-\mathcal{I}_{a+}^{\eta\,; \,\Psi}f\left(\cdot,\tilde{y}(\cdot)\right)\right\Vert_{C_{1-\zeta\,;\,\Psi}[a, a+\chi]}\rightarrow 0\,\,\, \text{as}\,\, n\rightarrow \infty.
\end{equation} 

Therefore, taking limit as $n \rightarrow \infty$ in \eqref{e7}, we get
\begin{align*}
\tilde{y}(t)&=y_0(t)+\mathcal{I}_{a+}^{\eta\,; \,\Psi}f\left(t,\tilde{y}(t)\right)\\
&=\mathcal{H}^{\Psi}_{\zeta}(t,a)\,y_a+\frac{1}{\Gamma ( \eta ) }\int_{a}^{t}\mathrm{Q}^{\eta}_{\Psi}(t,\sigma)f(\sigma,\tilde{y}(\sigma)) \,d\sigma . 
\end{align*}

This proves, $\tilde{y}\in C_{1-\zeta\,; \,\Psi}[a, a+\chi]$ is solution of \eqref{eq3}.
\noindent

{{Step 4}:} Uniqueness of the solution.\\
Let $y(t)$\,\,and\,\,\,${y}^\ast(t)$ be any two solutions of the  IVP \eqref{eq1}--\eqref{eq2} and consider the function defined by $z(t)=\left|y(t)-{y}^\ast(t)\right| $. Then, we get
\begin{align*}
z(t)&=\left|y(t)-{y}^\ast(t)\right|\\
&=\left|\frac{1}{\Gamma ( \eta ) }\int_{a}^{t}\mathrm{Q}^{\eta}_{\Psi}(t,\sigma)\left[f(\sigma,y(\sigma))-f(\sigma,{y}^\ast(\sigma))\right] \,d\sigma\right|\\
&\leq\frac{1}{\Gamma ( \eta ) }\int_{a}^{t}\mathrm{Q}^{\eta}_{\Psi}(t,\sigma)\left| f(\sigma,y(\sigma))-f(\sigma,{y}^\ast(\sigma)) \right|\,d\sigma\\
&\leq \frac{L}{\Gamma ( \eta ) }\int_{a}^{t}\mathrm{Q}^{\eta}_{\Psi}(t,\sigma)\left| y(\sigma)-{y}^\ast(\sigma) \right|\,d\sigma\\
&\leq \frac{L}{\Gamma ( \eta ) }\int_{a}^{t}\mathrm{Q}^{\eta}_{\Psi}(t,\sigma)z(\sigma)\,d\sigma.
\end{align*}
Using Gronwall inequality given in  the Theorem \ref{lema4}, we get
$$ z(t)\leq 0\times \mathcal{E}_{\eta}[L \left(\Psi(t)-\Psi(a)\right)^{\eta}].$$

Therefore $z(t)=0$ and we have $ y(t)={y}^\ast(t)$.
\end{proof}

\begin{rem}
For  $\eta=1,\, \nu=1$ and $\Psi(t)=t$ the above theorems includes the results  of Coddington {\rm[\cite{Coddington},Chapter 5]} for the ordinary differential equations. 
\end{rem}

\begin{theorem}	
Let $\{y_n\}$ be the sequence of Picard's successive approximation defined by \eqref{e7} and $y$ is the solution of the IVP \eqref{eq1}-\eqref{eq2}. Then the error $ y -y_{n}$ satisfies the condition 
\begin{equation}\label{eqn}
\| y -y_{n}\|_{C_{1-\zeta\,;\,\Psi}[a,a+\chi]}\leq\frac{M\Gamma(\zeta)}{L}\left( \mathcal{E}_{\eta ,\, \zeta}(L(\Psi(a+\chi)-\Psi(a))^{\eta})-\sum_{k=0}^{n}\frac{(L(\Psi(a+\chi)-\Psi(a))^{\eta})^{k}}{\Gamma(k\eta+\zeta)} \right). 
\end{equation}

\end{theorem}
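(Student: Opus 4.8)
The plan is to read off the error bound directly from the series representation of the Picard iterates together with the term-by-term estimate \eqref{e10} established inside the proof of Theorem \ref{theorem4.2}. By Theorem \ref{theorem4.2}, the sequence $\{y_n\}$ converges in $C_{1-\zeta\,;\,\Psi}[a,a+\chi]$ to the unique solution $y$ of \eqref{eq1}--\eqref{eq2}, and by \eqref{e8}--\eqref{e9} each iterate $y_n$ is the $n$-th partial sum of the norm-convergent series $y_0+\sum_{k=1}^{\infty}(y_k-y_{k-1})$ whose sum equals $y$. Hence $y-y_n=\sum_{k=n+1}^{\infty}(y_k-y_{k-1})$, the series converging with respect to $\|\cdot\|_{C_{1-\zeta\,;\,\Psi}[a,a+\chi]}$.

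First I would apply the triangle inequality to this tail series and then insert \eqref{e10} evaluated at $x=a+\chi$, that is,
\[
\|y_{k}-y_{k-1}\|_{C_{1-\zeta\,;\,\Psi}[a,a+\chi]}\le \frac{M\Gamma(\zeta)}{L}\,\frac{\big[L(\Psi(a+\chi)-\Psi(a))^{\eta}\big]^{k}}{\Gamma(k\eta+\zeta)},\qquad k\ge 1.
\]
This yields
\[
\|y-y_n\|_{C_{1-\zeta\,;\,\Psi}[a,a+\chi]}\le \sum_{k=n+1}^{\infty}\|y_k-y_{k-1}\|_{C_{1-\zeta\,;\,\Psi}[a,a+\chi]}\le \frac{M\Gamma(\zeta)}{L}\sum_{k=n+1}^{\infty}\frac{\big[L(\Psi(a+\chi)-\Psi(a))^{\eta}\big]^{k}}{\Gamma(k\eta+\zeta)}.
\]
Writing $w=L(\Psi(a+\chi)-\Psi(a))^{\eta}$ and using that $\mathcal{E}_{\eta,\,\zeta}$ is entire, the tail of its defining power series satisfies $\sum_{k=n+1}^{\infty}\frac{w^{k}}{\Gamma(k\eta+\zeta)}=\mathcal{E}_{\eta,\,\zeta}(w)-\sum_{k=0}^{n}\frac{w^{k}}{\Gamma(k\eta+\zeta)}$, and substituting this into the last display gives precisely \eqref{eqn}.

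I do not expect any genuine obstacle here: the whole argument rests on the induction bound \eqref{e10} and on the convergence statement of Theorem \ref{theorem4.2}, both already at hand. The only points that deserve a word of care are the passage $x\to a+\chi$ in \eqref{e10}, legitimate because the right-hand side is continuous in $x$, and the identification of $y-y_n$ with the tail of the telescoping series, justified by the fact that the Mittag--Leffler series is a convergent majorant; since the partial-sum estimates hold uniformly, letting the number of summands tend to infinity preserves the inequality.
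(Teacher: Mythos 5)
Your proposal is correct and follows essentially the same route as the paper: identify $y-y_n$ with the tail $\sum_{k=n+1}^{\infty}(y_k-y_{k-1})$ of the telescoping series, apply the triangle inequality together with the induction bound \eqref{e10} at $x=a+\chi$, and recognize the resulting tail sum as $\mathcal{E}_{\eta,\zeta}(w)-\sum_{k=0}^{n}w^{k}/\Gamma(k\eta+\zeta)$. No gaps.
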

\begin{proof}
From the proof of the Theorem \ref {theorem4.2},  we have
$$y=y_0+\sum_{k=1}^{\infty}(y_{k} - y_{k-1})$$
and
$$y_{n}=y_0+\sum_{k=1}^{n}(y_{k} - y_{k-1}).$$
Hence
$$ y -y_{n}=\sum_{k=n+1}^{\infty}(y_{k} - y_{k-1}).$$

From the above relations, in view of \eqref{e10}, we have 
\begin{align*}
&\| y -y_{n}\|_{C_{1-\zeta\,;\,\Psi}[a,a+\chi]}\\
&\leq \sum_{k=n+1}^{\infty}\|y_{k}-y_{k-1}\|_{C_{1-\zeta\,;\,\Psi}[a,a+\chi]}\\
&\leq \sum_{k=n+1}^{\infty} \frac{M \Gamma(\zeta)}{L}\frac{\left( L (\Psi(a+\chi)-\Psi(a))^{\eta}\right)^{k }}{\Gamma(k\eta+\zeta)}\\
&\leq\frac{M \Gamma(\zeta)}{L}\left(\sum_{k=0}^{\infty} \frac{\left(L(\Psi(a+\chi)-\Psi(a))^{\eta}\right)^{k}}{\Gamma(k\eta+\zeta)}-\sum_{k=0}^{n} \frac{\left(L(\Psi(a+\chi)-\Psi(a))^{\eta}\right)^{k }}{\Gamma(k\eta+\zeta)}\right)\\
&\leq \frac{M\Gamma(\zeta)}{L}\left( \mathcal{E}_{\eta ,\, \zeta}(L(\Psi(a+\chi)-\Psi(a))^{\eta})-\sum_{k=0}^{n}\frac{(L(\Psi(a+\chi)-\Psi(a))^{\eta})^{k}}{\Gamma(k\eta+\zeta)} \right).
\end{align*}
\end{proof}
\begin{rem}

From the inequality \eqref{eqn}, it follows that 
\begin{small}
\begin{align*}
&\lim\limits_{n\rightarrow \infty} \| y -y_{n}\|_{C_{1-\zeta\,;\,\Psi}[a,a+\chi]}\\
&\leq \frac{M\Gamma(\zeta)}{L}\lim\limits_{n\rightarrow \infty}\left(\sum_{k=0}^{\infty} \frac{\left(L(\Psi(a+\chi)-\Psi(a))^{\eta}\right)^{k}}{\Gamma(k\eta+\zeta)}-\sum_{k=0}^{n} \frac{\left(L(\Psi(a+\chi)-\Psi(a))^{\eta}\right)^{k }}{\Gamma(k\eta+\zeta)}\right) \\
&= \frac{M\Gamma(\zeta)}{L}\left(\mathcal{E}_{\eta ,\, \zeta}(L(\Psi(a+\chi)-\Psi(a))^{\eta})-\sum_{k=0}^{\infty}\frac{\left(L(\Psi(a+\chi)-\Psi(a))^{\eta}\right)^{k }}{\Gamma(k\eta+\zeta)}\right)\\
&=  \frac{M\Gamma(\zeta)}{L}\left(\mathcal{E}_{\eta ,\, \zeta}(L(\Psi(a+\chi)-\Psi(a))^{\eta})-\mathcal{E}_{\eta,\,\zeta}(L(\Psi(a+\chi)-\Psi(a))^{\eta}\right)\\
&=0.
\end{align*}
This implies the sequence $\{y_n \}$ of successive approximation converges to the solution $y $ of the problem \eqref{eq1}-\eqref{eq2} as $ n \rightarrow \infty.$
\end{small}
\end{rem}

\section{Picard's Successive Approximations: Linear Case}
Here we derive the representation formula  for the solution of linear Cauchy problem with constant coefficient and variable coefficients which extend the results of \cite[Chapter 7]{Gorenflo}.

\subsection{Linear Cauchy Type problem with Constant Coefficient}

\begin{theorem} \label{th6.1}
Let  $f\in C_{1-\zeta\,; \,\Psi}\left( \Delta,\,\R\right) ]$ and $\lambda\in \R$. Then, the solution of  the Cauchy problem for FDE with constant coefficient involving $ \Psi $-Hilfer fractional derivative
\eqref{a1}-\eqref{a2}
is given by
\begin{align*}
y\left( t\right)&=y_a(\Psi(t)-\Psi(a))^{\zeta-1} \,\mathcal{E}_{\eta,\,\zeta}\left( \lambda(\Psi(t)-\Psi(a))^{\eta}\right) \\
&\qquad+\int_{a}^{t}\mathrm{Q}^{\eta}_{\Psi}(t,\sigma)\,\mathcal{E}_{\eta,\,\eta}\left( \lambda(\Psi(t)-\Psi(\sigma))^{\eta}\right) \,f(\sigma)\,\,d\sigma\notag
 \end{align*}
 where,~$\mathrm{Q}^{\eta}_{\Psi}(t,\sigma)=\Psi'(\sigma)(\Psi(t)-\Psi(\sigma))^{\eta-1} $
\end{theorem}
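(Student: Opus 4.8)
The plan is to reduce the Cauchy problem \eqref{a1}--\eqref{a2} to its equivalent $\Psi$-fractional Volterra integral equation and then run the Picard scheme of Section 4, this time identifying the limit in closed form. Writing $F(t,y)=\lambda y+f(t)$, the problem \eqref{a1}--\eqref{a2} is equivalent, in $C_{1-\zeta\,;\,\Psi}(\Delta,\R)$ and exactly as for \eqref{eq3}, to
$$y(t)=\mathcal{H}^{\Psi}_{\zeta}(t,a)\,y_a+\mathcal{I}_{a+}^{\eta\,;\,\Psi}F(t,y(t)),\qquad t\in\Delta.$$
Since $f\in C_{1-\zeta\,;\,\Psi}(\Delta,\R)$, the map $F$ meets the hypotheses of Theorem \ref{theorem4.2} with Lipschitz constant $L=|\lambda|$, so the Picard iterates
$$y_0(t)=\mathcal{H}^{\Psi}_{\zeta}(t,a)\,y_a,\qquad y_m(t)=\mathcal{H}^{\Psi}_{\zeta}(t,a)\,y_a+\lambda\,\mathcal{I}_{a+}^{\eta\,;\,\Psi}y_{m-1}(t)+\mathcal{I}_{a+}^{\eta\,;\,\Psi}f(t)$$
converge in $C_{1-\zeta\,;\,\Psi}$ to the unique solution $y$. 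It therefore suffices to evaluate $\lim_{m\to\infty}y_m$.

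Next I would compute $y_m$ explicitly. Using the semigroup property $\mathcal{I}_{a+}^{\eta\,;\,\Psi}\mathcal{I}_{a+}^{k\eta\,;\,\Psi}=\mathcal{I}_{a+}^{(k+1)\eta\,;\,\Psi}$ of the $\Psi$-Riemann--Liouville integral (see \cite{Kilbas,Vanterler1}), a straightforward induction on $m$ gives
$$y_m(t)=\sum_{k=0}^{m}\lambda^{k}\,\mathcal{I}_{a+}^{k\eta\,;\,\Psi}\!\big(\mathcal{H}^{\Psi}_{\zeta}(\cdot,a)\,y_a\big)(t)+\sum_{k=0}^{m-1}\lambda^{k}\,\mathcal{I}_{a+}^{(k+1)\eta\,;\,\Psi}f(t),$$
with $\mathcal{I}_{a+}^{0\,;\,\Psi}=\mathrm{Id}$. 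By Lemma \ref{lema2},
$$\mathcal{I}_{a+}^{k\eta\,;\,\Psi}\!\big(\mathcal{H}^{\Psi}_{\zeta}(\cdot,a)\,y_a\big)(t)=\frac{y_a}{\Gamma(k\eta+\zeta)}\,(\Psi(t)-\Psi(a))^{k\eta+\zeta-1},$$
so the first sum equals $y_a(\Psi(t)-\Psi(a))^{\zeta-1}\sum_{k=0}^{m}\frac{(\lambda(\Psi(t)-\Psi(a))^{\eta})^{k}}{\Gamma(k\eta+\zeta)}$, and expanding $\mathcal{I}_{a+}^{(k+1)\eta\,;\,\Psi}f$ as the integral of $\mathrm{Q}^{(k+1)\eta}_{\Psi}(t,\sigma)f(\sigma)/\Gamma((k+1)\eta)$ turns the second sum into an integral over $[a,t]$.

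Finally, letting $m\to\infty$: the first sum tends to $y_a(\Psi(t)-\Psi(a))^{\zeta-1}\,\mathcal{E}_{\eta,\,\zeta}\!\big(\lambda(\Psi(t)-\Psi(a))^{\eta}\big)$ by the definition of the two-parameter Mittag--Leffler function. For the second sum I would factor $\mathrm{Q}^{\eta}_{\Psi}(t,\sigma)=\Psi'(\sigma)(\Psi(t)-\Psi(\sigma))^{\eta-1}$ out of the $k$-th term, leaving the series $\sum_{k\ge0}\frac{(\lambda(\Psi(t)-\Psi(\sigma))^{\eta})^{k}}{\Gamma((k+1)\eta)}=\mathcal{E}_{\eta,\,\eta}\!\big(\lambda(\Psi(t)-\Psi(\sigma))^{\eta}\big)$, which converges uniformly for $\sigma\in[a,t]$; since the remaining factor $(\Psi(t)-\Psi(\sigma))^{\eta-1}$ is integrable on $[a,t]$ and independent of $k$, this uniform convergence legitimizes interchanging the sum and the integral. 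Adding the two pieces produces exactly the claimed representation. The main obstacle here is precisely this interchange of summation and integration near the singularity $\sigma=t$; the rest is bookkeeping with Lemma \ref{lema2} and the semigroup property, while the convergence $y_m\to y$ in $C_{1-\zeta\,;\,\Psi}$ is already supplied by Theorem \ref{theorem4.2}.
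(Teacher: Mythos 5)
Your proposal is correct and follows essentially the same route as the paper: Picard iteration on the equivalent Volterra integral equation, an explicit closed form for $y_m$ established by induction, and passage to the limit $m\to\infty$ to recognize the two Mittag--Leffler series (the paper likewise glosses over, and you at least flag, the sum--integral interchange near $\sigma=t$). The only cosmetic difference is that you package the inductive step via the semigroup property of $\mathcal{I}_{a+}^{\eta\,;\,\Psi}$ and invoke Theorem \ref{theorem4.2} for convergence, whereas the paper carries out the equivalent Fubini interchange and Beta-integral evaluation by hand.
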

\begin{proof} The linear Cauchy problem \eqref{a1}-\eqref{a2} is equivalent to  
\begin{align}\label{e63}
y (t) &=\mathcal{H}^{\Psi}_{\zeta}(t,a)\,y_{a}+\frac{\lambda}{
\Gamma ( \eta ) }\int_{a}^{t}\mathrm{Q}^{\eta}_{\Psi}(t,\sigma)y(\sigma)\,d\sigma+\frac{1}{\Gamma ( \eta ) }\int_{a}^{t}\mathrm{Q}^{\eta}_{\Psi}(t,\sigma)f(\sigma) \,d\sigma.
\end{align}
Employing the method of successive approximation we derive the solution of  \eqref{e63}. Consider the sequences $\{y_{m}\}$ defined by
 \begin{align*}
 y_0(t)&=\mathcal{H}^{\Psi}_{\zeta}(t,a)\,y_{a}\\
 y_m(t)&=y_0(t)+\frac{\lambda}{ \Gamma ( \eta ) }\int_{a}^{t}\mathrm{Q}^{\eta}_{\Psi}(t,\sigma)y_{m-1}(\sigma) \,d\sigma\\
 &\qquad+\frac{1}{ \Gamma ( \eta ) }\int_{a}^{t}\mathrm{Q}^{\eta}_{\Psi}(t,\sigma)f(\sigma) \,d\sigma, ~(m=1,2,\cdots).
 \end{align*}

By method of induction, we prove that
 \begin{small}
 \begin{align}\label{e18}
 y_m(t)=y_{a}\sum_{k=0}^{m}\frac{\lambda^k( \Psi (t) -\Psi (a)) ^{k\eta+\zeta-1}}{\Gamma ( k\eta+\zeta )}+\int_{a}^{t}\Psi ^{\prime }(\sigma)\sum_{k=0}^{m-1}\frac{\lambda^{k}( \Psi (t) -\Psi (\sigma)) ^{(k+1)\eta-1}}{\Gamma ((k+1)\eta )} f(\sigma)\,d\sigma.
 \end{align}
 \end{small} 

For the case $m=1$, we have
 \begin{align*}
y_1(t)&=y_0(t)+\frac{\lambda}{
 \Gamma ( \eta ) }\int_{a}^{t}\mathrm{Q}^{\eta}_{\Psi}(t,\sigma)y_{0}(\sigma)\,d\sigma+\frac{1}{\Gamma ( \eta ) }\int_{a}^{t}\mathrm{Q}^{\eta}_{\Psi}(t,\sigma)f(\sigma) \,d\sigma\\
 &=y_0(t)+\frac{\lambda}{\Gamma ( \eta ) }\int_{a}^{t}\mathrm{Q}^{\eta}_{\Psi}(t,\sigma)\frac{( \Psi (\sigma)-\Psi (a) ) ^{\zeta -1}}{\Gamma ( \zeta ) }y_{a}\, \,d\sigma\\
 &\qquad+\frac{1}{\Gamma ( \eta ) }\int_{a}^{t}\mathrm{Q}^{\eta}_{\Psi}(t,\sigma)f(\sigma) \,d\sigma\\
 &=y_0(t)+y_{a}\frac{\lambda}{\Gamma ( \zeta ) }\, \mathcal{I}_{a+}^{\eta\,; \,\Psi}(\Psi(t)-\Psi(a))^{\zeta-1}+
 \mathcal{I}_{a+}^{\eta\,; \,\Psi}f(t)\\
 &=y_{a}\,\frac{\left( \Psi (t)-\Psi (a) \right) ^{\zeta -1}}{\Gamma ( \zeta ) }+y_{a} \, \frac{\lambda}{
 \Gamma ( \eta +\zeta) } ( \Psi (t) -\Psi (a)) ^{\eta+\zeta-1}\\
 &\qquad+\frac{1}{\Gamma ( \eta ) }\int_{a}^{t}\mathrm{Q}^{\eta}_{\Psi}(t,\sigma)f(\sigma) \,d\sigma.
  \end{align*}

This is the equation \eqref{e18} for $m=1$. Now, we assume that the equation \eqref{e18} is hold for $m=j$ and prove that it is
 also true for  $m=j+1$. In fact, 
 \begin{small}  
 \begin{align*}
 y_{j+1}(t) 
 &=y_0(t)+\frac{\lambda}{
 \Gamma ( \eta ) }\int_{a}^{t}\mathrm{Q}^{\eta}_{\Psi}(t,\sigma)y_{j}(\sigma) \,d\sigma+\frac{1}{\Gamma ( \eta ) }\int_{a}^{t}\mathrm{Q}^{\eta}_{\Psi}(t,\sigma)f(\sigma) \,d\sigma\\
 &=y_0(t)+\frac{\lambda}{\Gamma ( \eta ) }\int_{a}^{t}\mathrm{Q}^{\eta}_{\Psi}(t,\sigma) \left\{ y_{a}\sum_{k=0}^{j}\frac{\lambda^k ( \Psi (\sigma)-\Psi (a) ) ^{k\eta+\zeta -1}}{\Gamma (k\eta+ \zeta ) }\right.\\
  &\left.\qquad+\int_{a}^{\sigma}\Psi ^{\prime }(\tau)\sum_{k=0}^{j-1}
 \frac{\lambda^{k}}{\Gamma((k+1) \eta)} ( \Psi (\sigma)-\Psi(\tau))^{(k+1)\eta-1} f(\tau)d\tau)  \right \}\,d\sigma+\mathcal{I}_{a+}^{\eta\,; \,\Psi}f(t)\\
 &=y_0(t)+y_{a}\sum_{k=0}^{j}\frac{ \lambda^{k+1} }{\Gamma (k\eta+ \zeta ) }\,\mathcal{I}_{a+}^{\eta\,; \,\Psi}(\Psi(t)-\Psi(a))^{k\eta+\zeta-1}\\
 &\qquad +\sum_{k=0}^{j-1}\frac{ \lambda^{k+1} }{\Gamma ((k+1)\eta )}
  \frac{1}{\Gamma ( \eta )}\int_{a}^{t}\mathrm{Q}^{\eta}_{\Psi}(t,\sigma) \times\\
 &\qquad \qquad \left\{\int_{a}^{\sigma}\Psi ^{\prime }(\tau)( \Psi (\sigma)-\Psi(\tau))^{(k+1)\eta-1} f(\tau)d\tau \right \}\,d\sigma+\mathcal{I}_{a+}^{\eta\,; \,\Psi}f(t).
 \end{align*}
 \end{small}
 Changing the order of integration in the second last term, 
 \begin{small}  
  \begin{align*}
 & y_{j+1}(t) \\
 &=y_{a}\frac{\left( \Psi (t)-\Psi (a) \right) ^{\zeta -1}}{\Gamma ( \zeta ) }+y_{a}\sum_{k=0}^{j}\frac{ \lambda^{k+1} }{\Gamma ((k+1)\eta+\zeta)}(\Psi(t)-\Psi(a))^{(k+1)\eta+\zeta-1}\\
 &\qquad +\sum_{k=0}^{j-1}\frac{ \lambda^{k+1} }{\Gamma ((k+1)\eta )}\frac{1}{\Gamma ( \eta )}\int_{a}^{t}\Psi ^{\prime }(\tau) \left\{\int_{\tau}^{t}\Psi ^{\prime }(\sigma)( \Psi (\sigma)-\Psi(\tau))^{(k+1)\eta-1}(\Psi (t) -\Psi (\sigma)) ^{\eta-1} f(\sigma)\,d\sigma \right \} f(\tau)d\tau\\
 &\qquad+\frac{1}{\Gamma ( \eta ) }\int_{a}^{t}\mathrm{Q}^{\eta}_{\Psi}(t,\sigma)f(\sigma) \,d\sigma\\
 &=y_{a}\sum_{k=0}^{j+1}\frac{ \lambda^{k} }{\Gamma (k\eta+ \zeta)}(\Psi(t)-\Psi(a))^{k\eta+\zeta-1}\\
 &\qquad +\sum_{k=0}^{j-1}\frac{ \lambda^{k+1} }{\Gamma ((k+1)\eta )}\frac{1}{\Gamma ( \eta )}\int_{a}^{t}\frac{\Gamma(\eta)\Gamma((k+1)\eta)}{\Gamma((k+2)\eta)}( \Psi (t)-\Psi(\tau))^{(k+2)\eta-1} \Psi ^{\prime }(\tau) f(\tau)d\tau\\
 &\qquad +\frac{1}{\Gamma ( \eta ) }\int_{a}^{t}\mathrm{Q}^{\eta}_{\Psi}(t,\sigma)f(\sigma) \,d\sigma\\
 &=y_{a}\sum_{k=0}^{j+1}\frac{ \lambda^{k} }{\Gamma (k\eta+ \zeta)}(\Psi(t)-\Psi(a))^{k\eta+\zeta-1}\\ &\qquad+\int_{a}^{t}\left\{\sum_{k=0}^{j-1}\frac{ \lambda^{k+1} }{\Gamma((k+2)\eta)}(\Psi(t)-\Psi(\sigma))^{(k+2)\eta-1}
 +\frac{( \Psi (t) -\Psi (\sigma)) ^{\eta-1}}{\Gamma(\eta)}\right\} \Psi ^{\prime }(\sigma) f(\sigma) \,d\sigma\\
 &=y_{a}\sum_{k=0}^{j+1}\frac{ \lambda^{k} }{\Gamma (k\eta+ \zeta)}(\Psi(t)-\Psi(a))^{k\eta+\zeta-1}+\int_{a}^{t}\Psi ^{\prime }(\sigma)\sum_{k=0}^{j}\frac{ \lambda^{k} }{\Gamma((k+1)\eta)}(\Psi(t)-\Psi(\sigma))^{(k+1)\eta-1} f(\sigma) \,d\sigma.
 \end{align*}
\end{small}
This proves equation \eqref{e18} is true for $m=j+1$. By mathematical induction the equation \eqref{e18} is true for all $m\in\N$.
 Taking limit as $m\rightarrow \infty$, on both sides of \eqref{e18}, we get the solution of the Cauchy problem \eqref{a1}-\eqref{a2} as
\begin{align*}
y(t)&=\lim\limits_{ m\rightarrow \infty}y_m(t)\\
&=y_a\sum_{k=0}^{\infty}\frac{\lambda^k( \Psi (t) -\Psi (a)) ^{k\eta+\zeta-1}}{\Gamma ( k\eta+\zeta )}+\int_{a}^{t}\Psi ^{\prime }(\sigma)\sum_{k=0}^{\infty}\frac{\lambda^{k}( \Psi (t) -\Psi (\sigma)) ^{(k+1)\eta-1}}{\Gamma ((k+1)\eta )} f(\sigma)\,d\sigma\\
&=y_a( \Psi (t) -\Psi (a)) ^{\zeta-1}\mathcal{E}_{\eta,\,\zeta}\left( \lambda(\Psi(t)-\Psi(a))^{\eta}\right) \\
&\qquad+\int_{a}^{t}\mathrm{Q}^{\eta}_{\Psi}(t,\sigma) \mathcal{E}_{\eta,\,\eta}\left( \lambda(\Psi(t)-\Psi(\sigma))^{\eta}\right) f(\sigma)\,d\sigma.
\end{align*}
\end{proof}
\subsection{Linear Cauchy Type problem with Variable Coefficient}
\begin{theorem} \label{th7.1} Let $f\in C_{1-\zeta\,; \,\Psi}\left( \Delta,\R\right) ,~\lambda \in \R~\text{and}~\mu>1-\eta $. Then, the solution of Cauchy problem for homogeneous FDE with variable coefficient involving $ \Psi$-Hilfer fractional derivative
\eqref{a3}-\eqref{a4}
 is given by
  \begin{align*}
  y\left( t\right) =\mathcal{H}^{\Psi}_{\zeta}(t,a)\,y_{a}\, \mathcal{E}_{\eta,\,1+\frac{\mu-1}{\eta},\,\frac{\mu+\zeta-2}{2}} \left( \lambda(\Psi(t)-\Psi(a))^{\eta+\mu-1}\right).
   \end{align*}
   where, $\mathcal{H}^{\Psi}_{\zeta}(t,a)=\frac{\left( \Psi (t)-\Psi (a) \right) ^{\zeta-1}}{\Gamma(\zeta)}$
  \end{theorem}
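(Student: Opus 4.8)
The plan is to run Picard's successive approximations as in the proof of Theorem~\ref{th6.1}, but with the constant $\lambda$ replaced by the variable coefficient $\lambda(\Psi(t)-\Psi(a))^{\mu-1}$, so that the Mittag--Leffler bookkeeping of that proof is replaced by the three--parameter Kilbas--Saigo coefficients of Definition~\ref{defKS}. First I would note that, by the same equivalence used in Theorem~\ref{th6.1}, the homogeneous Cauchy problem \eqref{a3}--\eqref{a4} is equivalent, in $C_{1-\zeta\,;\,\Psi}(\Delta,\R)$, to the Volterra integral equation
$$y(t)=\mathcal{H}^{\Psi}_{\zeta}(t,a)\,y_{a}+\frac{\lambda}{\Gamma(\eta)}\int_{a}^{t}\mathrm{Q}^{\eta}_{\Psi}(t,\sigma)\,(\Psi(\sigma)-\Psi(a))^{\mu-1}\,y(\sigma)\,d\sigma,$$
that is, $y=y_{0}+\lambda\,\mathcal{I}_{a+}^{\eta\,;\,\Psi}[(\Psi(\cdot)-\Psi(a))^{\mu-1}y(\cdot)]$ with $y_{0}(t)=\mathcal{H}^{\Psi}_{\zeta}(t,a)\,y_{a}$. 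The hypothesis $\mu>1-\eta$, together with $0<\eta\le\zeta\le 1$, gives $\mu+\zeta-1>0$, which makes the integrand locally integrable near $a$ and, afterwards, guarantees admissibility of the parameters in Definition~\ref{defKS}. I would then define the iterates $y_{0}$ as above and $y_{m}=y_{0}+\lambda\,\mathcal{I}_{a+}^{\eta\,;\,\Psi}[(\Psi(\cdot)-\Psi(a))^{\mu-1}y_{m-1}(\cdot)]$ for $m\ge 1$, each lying in $C_{1-\zeta\,;\,\Psi}[a,b]$ by Lemma~\ref{lema1}.

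The core step is to prove by induction on $m$ that
$$y_{m}(t)=\frac{y_{a}}{\Gamma(\zeta)}\,(\Psi(t)-\Psi(a))^{\zeta-1}\sum_{k=0}^{m}c_{k}\,[\lambda(\Psi(t)-\Psi(a))^{\eta+\mu-1}]^{k},\qquad c_{0}=1,$$
where the $c_{k}$ are generated by Lemma~\ref{lema2}: applying $\mathcal{I}_{a+}^{\eta\,;\,\Psi}$ to $(\Psi(\cdot)-\Psi(a))^{k(\eta+\mu-1)+\mu+\zeta-2}$ raises its exponent to $(k+1)(\eta+\mu-1)+\zeta-1$ and multiplies it by $\frac{\Gamma(k(\eta+\mu-1)+\mu+\zeta-1)}{\Gamma((k+1)(\eta+\mu-1)+\zeta)}$. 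Hence the recursion $c_{k+1}=c_{k}\,\frac{\Gamma(k(\eta+\mu-1)+\mu+\zeta-1)}{\Gamma((k+1)(\eta+\mu-1)+\zeta)}$ is forced, and unwinding it gives the finite product $c_{k}=\prod_{j=0}^{k-1}\frac{\Gamma(j(\eta+\mu-1)+\mu+\zeta-1)}{\Gamma((j+1)(\eta+\mu-1)+\zeta)}$. The base case $m=1$ is one application of Lemma~\ref{lema2} with $\delta=\mu+\zeta-1$, and the inductive step is the same computation applied termwise to $y_{m-1}$ using the linearity of $\mathcal{I}_{a+}^{\eta\,;\,\Psi}$. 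I expect the only delicate point to be keeping the arguments of the Gamma functions straight through the induction; the rest is formal manipulation of powers of $\Psi(t)-\Psi(a)$.

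Finally I would pass to the limit $m\to\infty$. As in Theorem~\ref{th6.1}, the interchange of limit and summation is justified because, after multiplying by the weight $(\Psi(t)-\Psi(a))^{1-\zeta}$, the partial sums converge uniformly on $\Delta$ to an entire power series: writing the Gamma--arguments above in the canonical form $\eta(j\widetilde{m}+l)+1$ forces $\eta\widetilde{m}=\eta+\mu-1$, i.e.\ $\widetilde{m}=1+\frac{\mu-1}{\eta}$, and $\eta l+1=\mu+\zeta-1$, and one checks that then $\eta(j\widetilde{m}+l+1)+1=(j+1)(\eta+\mu-1)+\zeta$ automatically; since $\mu>1-\eta$ makes all these arguments strictly positive, the admissibility condition in Definition~\ref{defKS} is met and $\sum_{k\ge 0}c_{k}z^{k}=\mathcal{E}_{\eta,\,\widetilde{m},\,l}(z)$. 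Substituting this back, using $(\Psi(t)-\Psi(a))^{\zeta-1}/\Gamma(\zeta)=\mathcal{H}^{\Psi}_{\zeta}(t,a)$, and evaluating at $z=\lambda(\Psi(t)-\Psi(a))^{\eta+\mu-1}$ gives the claimed representation formula. If one also wants uniqueness, it follows from Theorem~\ref{th4.1} applied to the Lipschitz right--hand side $f(t,y)=\lambda(\Psi(t)-\Psi(a))^{\mu-1}y$.
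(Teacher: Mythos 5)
Your proof follows essentially the same route as the paper's: the equivalent Volterra integral equation, Picard iterates starting from $\mathcal{H}^{\Psi}_{\zeta}(t,a)\,y_a$, an induction on the partial sums driven by Lemma \ref{lema2}, and passage to the limit to recognize the Kilbas--Saigo function. Your parameter identification $l=\frac{\mu+\zeta-2}{\eta}$ matches the coefficients $c_k$ appearing in the paper's own induction and, incidentally, shows that the subscript $\frac{\mu+\zeta-2}{2}$ in the theorem statement is a typo for $\frac{\mu+\zeta-2}{\eta}$.
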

\begin{proof} The equivalent fractional integral of  \eqref{a3}-\eqref{a4} is 
\begin{equation}\label{e21}
y(t)=\mathcal{H}^{\Psi}_{\zeta}(t,a)\,y_{a}+\frac{\lambda}{
\Gamma ( \eta ) }\int_{a}^{t}\mathrm{Q}^{\eta}_{\Psi}(t,\sigma)( \Psi (\sigma) -\Psi (a)) ^{\mu-1}y(\sigma) \,d\sigma.
\end{equation}
We consider the sequences $\{y_{m}\}$ of successive approximation defined by
\begin{align*}
y_0(t)&=\mathcal{H}^{\Psi}_{\zeta}(t,a)\,y_{a}\\
 y_m(t)&=y_0(t)+\frac{\lambda}{\Gamma ( \eta ) }\int_{a}^{t}\mathrm{Q}^{\eta}_{\Psi}(t,\sigma)( \Psi (\sigma) -\Psi (a)) ^{\mu-1}y_{m-1}(\sigma) \,d\sigma, ~(m=1,2,\cdots),
\end{align*}
and derive the solution of \eqref{e21}. By mathematical induction, we prove that
\begin{align}\label{e22}
 y_{m}(t)=\mathcal{H}^{\Psi}_{\zeta}(t,a)\,y_{a}\left\{1+\sum_{k=1}^{m}c_k\,\left( \lambda( \Psi (t) -\Psi (a)) ^{\eta+\mu-1}\right) ^k \right \},
 \end{align}
 where 
 \begin{align*}
c_k&=\prod_{j=0}^{k-1}\frac{\Gamma(j(\eta+\mu-1)+\mu+\zeta-1)}{\Gamma(j(\eta+\mu-1)+\eta+\mu+\zeta-1)}\\
&=\prod_{j=0}^{k-1}\frac{\Gamma(\eta[j(1+\frac{\mu-1}{\eta})+\frac{\mu+\zeta-2}{\eta}]+1)}{\Gamma(\eta[j(1+\frac{\mu-1}{\eta})+\frac{\mu+\zeta-2}{\eta}+1]+1)}.
 \end{align*}

For the case $m=1$, we have
 \begin{align*}
 y_1(t)
 &=y_0(t)+\frac{\lambda}{\Gamma ( \eta ) }\int_{a}^{t}\mathrm{Q}^{\eta}_{\Psi}(t,\sigma)( \Psi (\sigma) -\Psi (a)) ^{\mu-1}y_0(\sigma) \,d\sigma\\
&=\mathcal{H}^{\Psi}_{\zeta}(t,a)\,y_{a}+\frac{\lambda}{\Gamma ( \eta ) }\int_{a}^{t}\mathrm{Q}^{\eta}_{\Psi}(t,\sigma)( \Psi (\sigma) -\Psi (a)) ^{\mu-1}y_{a}\frac{( \Psi (\sigma)-\Psi (a) ) ^{\zeta -1}}{\Gamma ( \zeta ) } \,d\sigma\\
&=\mathcal{H}^{\Psi}_{\zeta}(t,a)\,y_{a}+y_{a}\frac{\lambda}{\Gamma ( \zeta ) }\,\, \mathcal{I}_{a+}^{\eta\,; \,\Psi}(\Psi(t)-\Psi(a))^{\mu+\zeta-2}\\
&=y_{a}\frac{( \Psi (t)-\Psi (a) ) ^{\zeta -1}}{\Gamma ( \zeta ) }+y_{a}\,\,\frac{\lambda}{\Gamma(\zeta)}\frac{\Gamma(\mu+\zeta-1)}{\Gamma(\eta+\mu+\zeta-1)}(\Psi(t)-\Psi(a))^{\eta+\mu+\zeta-2}\\
&=\mathcal{H}^{\Psi}_{\zeta}(t,a)\,y_{a}\left\{1+\frac{\Gamma(\mu+\zeta-1)}{\Gamma(\eta+\mu+\zeta-1)}\left( \lambda( \Psi (t) -\Psi (a)) ^{\eta+\mu-1}\right) \right \}.
\end{align*}

This is the equation \eqref{e22} for  $m=1$. Now, we assume that the equation \eqref{e22} is hold for $m=j$. we prove it is
 also hold for  $m=j+1$. In fact, 
 \begin{align*}
& y_{j+1}(t)\\
&= y_0(t)+\frac{\lambda}{\Gamma ( \eta ) }\int_{a}^{t}\mathrm{Q}^{\eta}_{\Psi}(t,\sigma)( \Psi (\sigma) -\Psi (a)) ^{\mu-1}y_{j}(\sigma) \,d\sigma\\
&= y_0(t)+\frac{\lambda}{\Gamma ( \eta ) }\int_{a}^{t}\mathrm{Q}^{\eta}_{\Psi}(t,\sigma)( \Psi (\sigma) -\Psi (a)) ^{\mu-1} \times\\
& \qquad \frac{( \Psi (\sigma)-\Psi (a) ) ^{\zeta -1}}{\Gamma (\zeta)}y_{a}\,\left\{1+\sum_{k=1}^{j}c_{k}\left( \lambda( \Psi (\sigma) -\Psi (a)) ^{\eta+\mu-1}\right) ^{k} \right \} \,d\sigma\\
&= y_0(t)+y_{a}\frac{\lambda}{\Gamma (\zeta)}\frac{1}{\Gamma ( \eta )} \int_{a}^{t}\mathrm{Q}^{\eta}_{\Psi}(t,\sigma)( \Psi (\sigma) -\Psi (a)) ^{\mu+\zeta-2} \,d\sigma\\
&\qquad+y_a\sum_{k=1}^{j}\frac{c_{k}\,\,\lambda^{k+1}}{\Gamma (\zeta)}\frac{1}{\Gamma ( \eta )}\int_{a}^{t}\mathrm{Q}^{\eta}_{\Psi}(t,\sigma)( \Psi (\sigma) -\Psi (a)) ^{(k+1)\mu+k\eta+\zeta-k-2} \,d\sigma\\
&=y_0(t)+y_{a}\frac{\lambda}{{\Gamma(\zeta)}}\,\,\mathcal{I}_{a+}^{\eta\,; \,\Psi}(\Psi(t)-\Psi(a))^{\mu+\zeta-2}\\
&\qquad+y_a\sum_{k=1}^{j}\frac{c_{k}\,\,\lambda^{k+1}}{\Gamma (\zeta)}\,\,\mathcal{I}_{a+}^{\eta\,; \,\Psi}(\Psi(t)-\Psi(a))^{(k+1)\mu+k\eta+\zeta-k-2}\\
&=y_{a}\frac{(\Psi(t)-\Psi(a))^{\zeta-1}}{\Gamma(\zeta)}+y_a\frac{\lambda \,\Gamma(\mu+\zeta-1)}{\Gamma(\zeta)\Gamma(\eta+\mu+\zeta-1)}(\Psi(t)-\Psi(a))^{\eta+\mu+\zeta-2}\\
&\qquad+y_a\sum_{k=1}^{j}\frac{c_{k}\,\,\lambda^{k+1}}{\Gamma (\zeta)}\frac{\Gamma((k+1)\mu+k\eta+\zeta-(k+1))}{\Gamma((k+1)(\mu+\eta)+\zeta-(k+1))}(\Psi(t)-\Psi(a))^{(k+1)(\eta+\mu)+\zeta-k-2}\\
&=\mathcal{H}^{\Psi}_{\zeta}(t,a)\,y_{a}\left\{1+\frac{\Gamma(\mu+\zeta-1)}{\Gamma(\eta+\mu+\zeta-1)}\lambda(\Psi(t)-\Psi(a))^{\eta+\mu-1}\right.\\
&\qquad~\left.+\sum_{k=1}^{j}c_{k}\,\,\frac{\Gamma((k+1)\mu+k\eta+\zeta-(k+1)}{\Gamma((k+1)(\mu+\eta)+\zeta-(k+1)}\lambda^{k+1}(\Psi(t)-\Psi(a))^{(k+1)(\eta+\mu-1)}\right\}\\
&=\mathcal{H}^{\Psi}_{\zeta}(t,a)\,y_{a}\left\{1+\frac{\Gamma(\mu+\zeta-1)}{\Gamma(\eta+\mu+\zeta-1)}\lambda(\Psi(t)-\Psi(a))^{\eta+\mu-1}
\right.\\
&\qquad~+\left.\sum_{k=1}^{j}c_{k}\frac{\Gamma(k(\eta+\mu-1)+\mu+\zeta-1)}{\Gamma(k(\eta+\mu-1)+\eta+\mu+\zeta-1)}\lambda^{k+1}(\Psi(t)-\Psi(a))^{(k+1)(\eta+\mu-1)}\right\}\\
&=\mathcal{H}^{\Psi}_{\zeta}(t,a)\,y_{a}
\left\{1+\sum_{k=1}^{j+1}c_{k}{\left( \lambda(\Psi(t)-\Psi(a))^{(\eta+\mu-1)}\right) }^{k}\right\}
\end{align*}
where, $$c_k=\prod_{j=0}^{k-1}\frac{\Gamma(j(\eta+\mu-1)+\mu+\zeta-1)}{\Gamma(j(\eta+\mu-1)+\eta+\mu+\zeta-1)} .$$

This proves equation \eqref{e22} is true for $m=j+1$. By mathematical induction the equation \eqref{e22} is true for all $m\in\N$. Taking limit as $m\rightarrow \infty$, on both sides of \eqref{e22}, we get the solution of the Cauchy problem \eqref{a3}-\eqref{a4}, given by 
 \begin{align*}
 y(t)&=\lim\limits_{ m\rightarrow \infty}y_m(t)\\
 &=\mathcal{H}^{\Psi}_{\zeta}(t,a)\,y_{a}\left\{1+\sum_{k=1}^{\infty}c_k[\lambda( \Psi (t) -\Psi (a)) ^{\eta+\mu-1}]^k \right \}\\
 &=\mathcal{H}^{\Psi}_{\zeta}(t,a)\,y_{a} \,\mathcal{E}_{\eta,\,1+\frac{\mu-1}{\eta},\,\frac{\mu+\zeta-2}{2}}\left(  \lambda(\Psi(t)-\Psi(a))^{\eta+\mu-1}\right).
 \end{align*}
 where $\mathcal{E}_{\eta,\,1+\frac{\mu-1}{\eta},\,\frac{\mu+\zeta-2}{2}}(\cdot)$ is generalized (Kilbas--Saigo) Mittag--Leffler type function of three parameters.
 \end{proof}

\section{Concluding remarks}
In the present paper, we are able to provide a brief study of the theory of FDEs by means of the $\Psi$-Hilfer fractional derivative. We examined the existence along with the interval of existence, uniqueness, dependence of solutions and Picard's successive approximations in cases: nonlinear and linear.

The existence results pertaining to Cauchy  problem (\ref{eq1})-(\ref{eq2}), was obtained through the theorems of Schauder and Arzela-Ascoli. On the other hand, we can prove the uniqueness and continuous dependence of the problem (\ref{eq1})-(\ref{eq2}) by making use of mathematical induction and Weissinger's fixed point theorem. In addition, we investigate Picard's successive approximation in the nonlinear case for the Cauchy problem (\ref{eq1})-(\ref{eq2}) and in the linear cases: (\ref{a1})-(\ref{a2}) with constant coefficients and (\ref{a3})-(\ref{a4}) with variable coefficients, making use of the Gronwall's inequality and mathematical induction. It should be noted that the results obtained in the space of the weighted functions $C_{1-\zeta;\,\Psi}\left( \Delta,\R\right) $ are contributions to the fractional calculus field, in particular, the fractional analysis.

The question that arises is: will it be possible to perform the same study in space $L_{p}([0,1],\mathbb{R})$ with the norm $||(\cdot)||_{p,\,\eta}$, involving $\Psi$-Hilfer fractional derivative and Banach's fixed point theorem \cite{vanterler3}? If the answer is yes. What are the restrictions for such results, if any? These issues and others, are studies that are in progress and will be published in the future.


\end{document}